\newtheorem{theorem}{Theorem}[section]
\newtheorem{lemma}[theorem]{Lemma}
\theoremstyle{definition}
\newtheorem{remark}[theorem]{Remark}
\numberwithin{equation}{section}
\author[J. Chen]{Jiecheng Chen}
\address{Jiecheng Chen, Department of Mathematics, Zhejiang Normal University, Jinhua,
321004, P. R. China}
\email{jcchen@zjnu.edu.cn}
\author[G. Hu]{Guoen Hu}
\address{Guoen Hu, Department of Mathematics, School of Science, Zhejiang University of Science and Technology,
	Hangzhou 310023, People's Republic of China}
\email{guoenxx@163.com}
\author[X. Tao]{Xiangxing Tao}
\address{Xiangxing Tao, Department of Mathematics, School of Science, Zhejiang University of Science and Technology,
	Hangzhou 310023, People's Republic of China}
\email{xxtao@zust.edu.cn}
\thanks{The research of the first author was supported by the NNSF of
China under grant \#12071437, the research of the second  (corresponding) author was supported by
the NNSF of
China under grants \#11871108, and the third author was supported by the NNSF of
China under grant \#11771339.}
\keywords{Calder\'on commutator, Fourier transform,
Littlewood-Paley theory, Calder\'on reproducing formula, approximation}
\subjclass{ 42B20}
\begin{document}

\title[Calder\'on commutator]{$L^p(\mathbb{R}^d)$ boundedness for the Calder\'on commutator with rough kernel}

\begin{abstract}
Let $k\in\mathbb{N}$, $\Omega$ be homogeneous of degree zero, integrable on $S^{d-1}$ and have vanishing moment of order $k$, $a$ be a function on $\mathbb{R}^d$ such that $\nabla a\in L^{\infty}(\mathbb{R}^d)$, and $T_{\Omega,\,a;k}$ be the  $d$-dimensional Calder\'on commutator defined  by
$$T_{\Omega,\,a;k}f(x)={\rm p.\,v.}\int_{\mathbb{R}^d}\frac{\Omega(x-y)}{|x-y|^{d+k}}\big(a(x)-a(y)\big)^kf(y){d}y.$$
In this paper, the authors prove that if $$\sup_{\zeta\in S^{d-1}}\int_{S^{d-1}}|\Omega(\theta)|\log ^{\beta} \big(\frac{1}{|\theta\cdot\zeta|}\big)d\theta<\infty,$$ with $\beta\in(1,\,\infty)$,
then for $\frac{2\beta}{2\beta-1}<p<2\beta$, $T_{\Omega,\,a;\,k}$ is bounded on $L^p(\mathbb{R}^d)$. \end{abstract}
\maketitle
\section{Introduction}
We will work on $\mathbb{R}^d$, $d\geq 2$.
Let $k\in\mathbb{N}$, $\Omega$ be homogeneous of degree zero, integrable on  $S^{d-1}$, the unit sphere in $\mathbb{R}^d$,  and have vanishing moment of order $k$, that is,  for all multi-indices $\gamma\in\mathbb{Z}_+^d$,
\begin{eqnarray}\label{equa:1.1}\int_{S^{d-1}}\Omega(\theta)\theta^{\gamma}d\theta=0,\,\,\,|\gamma|=k.\end{eqnarray} Let $a$ be a function on $\mathbb{R}^d$ such that $\nabla a\in L^{\infty}(\mathbb{R}^d)$. Define the  $d$-dimensional Calder\'on commutator $T_{\Omega, a;\,k}$ by
\begin{eqnarray}\label{eq:1.2} T_{\Omega, a;k}f(x)={\rm p. v.}\int_{\mathbb{R}^d}\frac{\Omega(x-y)}{|x-y|^{d+k}}\big(a(x)-a(y)\big)^kf(y)dy.\end{eqnarray}
For simplicity, we denote $T_{\Omega,a;\,1}$ by $T_{\Omega,a}$. Commutators of this type were introduced by Calder\'on \cite{cal1}, who proved that if $\Omega\in L\log L(S^{d-1})$, then $T_{\Omega,a}$ is bounded on $L^p(\mathbb{R}^d)$ for all $p\in (1,\,\infty)$. It should be pointed out that Calder\'on's result in \cite{cal1} also holds for $T_{\Omega,a;\,k}$.
Pan, Wu and Yang \cite{pwy} improved Calder\'on's result, and obtained the following conclusion.
\begin{theorem}\label{thm1.-1}
Let $\Omega$ be homogeneous of degree zero, satisfy the vanishing moment (\ref{equa:1.1}) with $k=1$, $a$ be a   function on $\mathbb{R}^d$ such that $\nabla a\in L^{\infty}(\mathbb{R}^d)$. Suppose that $\Omega\in H^1(S^{d-1})$ (the Hardy space on $S^{d-1}$), then
$T_{\Omega,a}$ is bounded on $L^p(\mathbb{R}^d)$ for all $p\in (1,\,\infty)$.
\end{theorem}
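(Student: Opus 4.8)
To prove Theorem \ref{thm1.-1} the plan is to split $\Omega$ into its even and odd parts, $\Omega^{e}(\theta)=\tfrac12\big(\Omega(\theta)+\Omega(-\theta)\big)$ and $\Omega^{o}(\theta)=\tfrac12\big(\Omega(\theta)-\Omega(-\theta)\big)$; since the antipodal map is an isometry of $S^{d-1}$ both parts lie in $H^{1}(S^{d-1})$, and because $\int_{S^{d-1}}\Omega^{e}(\theta)\theta\,d\theta=0$ automatically, $\Omega^{o}$ still has vanishing first moment. For the even part I would use the method of rotations: passing to polar coordinates $y=x-r\theta$ and symmetrizing in $\theta\mapsto-\theta$ turns $T_{\Omega^{e},a}f(x)$ into $\tfrac12\int_{S^{d-1}}\Omega^{e}(\theta)\big({\rm p.\,v.}\int_{\mathbb R}\tfrac{a(x)-a(x-r\theta)}{r^{2}}f(x-r\theta)\,dr\big)d\theta$. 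By Minkowski's integral inequality $\|T_{\Omega^{e},a}f\|_{p}$ is at most $\tfrac12\int_{S^{d-1}}|\Omega^{e}(\theta)|\,\|\,\cdot\,\|_{p}\,d\theta$, and foliating $\mathbb R^{d}$ by the lines $x_{0}+\mathbb R\theta$ reduces each inner norm, via Fubini, to the one-dimensional first Calder\'on commutator of $t\mapsto a(x_{0}+t\theta)$, which is bounded on $L^{p}(\mathbb R)$ with norm $\lesssim\|\nabla a\|_{\infty}$ uniformly in $\theta$; together with $\|\Omega^{e}\|_{L^{1}(S^{d-1})}\le\|\Omega\|_{L^{1}(S^{d-1})}$ this gives $\|T_{\Omega^{e},a}f\|_{p}\lesssim\|\nabla a\|_{\infty}\|\Omega\|_{L^{1}(S^{d-1})}\|f\|_{p}$ for all $p\in(1,\infty)$; no cancellation of $\Omega$ is needed here.

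The odd part is the substantive case — the method of rotations fails there because the per-direction integrals genuinely diverge, and it is precisely this divergence that is killed, after integration in $\theta$, by the vanishing first moment. I would reduce it to a uniform atomic estimate. Decompose $\Omega^{o}=\sum_{m}\lambda_{m}b_{m}$ into odd $L^{\infty}$-atoms supported on pairs of antipodal caps of radius $\delta_{m}$, with $\|b_{m}\|_{\infty}\lesssim\delta_{m}^{1-d}$ and $\sum_{m}|\lambda_{m}|\lesssim\|\Omega^{o}\|_{H^{1}(S^{d-1})}$. An atom has mean zero but in general not vanishing first moment, so I would pass to the corrected atoms $b_{m}^{\flat}(\theta)=b_{m}(\theta)-c_{d}\big(\int_{S^{d-1}}b_{m}(\phi)\phi\,d\phi\big)\cdot\theta$ ($c_{d}$ a dimensional constant): these are still odd, have vanishing moments of orders $0$ and $1$, differ from $b_{m}$ by a linear function of size $O(1)$, and — because $\Omega^{o}$ itself has vanishing first moment — still satisfy $\sum_{m}\lambda_{m}b_{m}^{\flat}=\Omega^{o}$. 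Hence $T_{\Omega^{o},a}=\sum_{m}\lambda_{m}T_{b_{m}^{\flat},a}$, and it suffices to prove $\|T_{b,a}\|_{L^{p}(\mathbb R^{d})\to L^{p}(\mathbb R^{d})}\lesssim 1$ uniformly over all such generalized atoms $b$, with the implied constant depending only on $d$, $p$ and $\|\nabla a\|_{\infty}$ and not on the cap radius.

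For the uniform atomic bound I would write $a(x)-a(y)=(x-y)\cdot\int_{0}^{1}\nabla a\big(y+s(x-y)\big)ds$, so that the kernel of $T_{b,a}$ becomes $|x-y|^{-d}\,\vec b\big(\tfrac{x-y}{|x-y|}\big)\cdot\int_{0}^{1}\nabla a(y+s(x-y))ds$ with $\vec b(\theta)=b(\theta)\theta$ now even and of mean zero, and $T_{b,a}f=\int_{0}^{1}L^{s}f\,ds$ where $L^{s}$ is a vector-valued rough singular integral of mean-zero type carrying the bounded coefficient $\nabla a(sx+(1-s)y)$. At the endpoints the coefficient collapses, $L^{0}f=T_{\vec b}(\nabla a\cdot f)$ and $L^{1}f=\nabla a\cdot T_{\vec b}f$, reducing to the classical uniform-in-atom $L^{p}$ bound for the $H^{1}(S^{d-1})$ rough singular integral $T_{\vec b}$; for $0<s<1$ one treats $L^{s}$ by freezing the coefficient along dyadic annuli, uniformly in $s$. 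The engine is then the Duoandikoetxea--Rubio de Francia scheme applied to $T_{\vec b}$ and its rescaled commutator companions: decompose the kernel dyadically, $T_{\vec b}=\sum_{j}K_{j}$ with $\operatorname{supp}K_{j}\subset\{|x-y|\sim 2^{j}\}$; estimate $\widehat{K_{j}}(\xi)$ by $2^{j}|\xi|$ (cancellation), by $2^{j}\delta|\xi|$ (smallness of the cap), and by $(2^{j}\delta|\xi|)^{-N}$ for frequencies transverse to the cap (non-stationary phase / van der Corput); insert a Littlewood--Paley partition via the Calder\'on reproducing formula, $f=\sum_{l}\Delta_{l}f$, and regroup by total scale, $T_{\vec b}f=\sum_{m}\big(\sum_{j}K_{j}\Delta_{m-j}f\big)$. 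Summing these Fourier bounds across the $\sim\log(1/\delta)$ transitional scales $2^{j}|\xi|\in[1,\delta^{-1}]$ yields geometric series of total size $O(1)$, giving the $L^{2}$ bound uniformly in the cap; the $L^{p}$ bound for $p\ne2$ then follows by dominating each regrouped block by a square function built from the Hardy--Littlewood maximal operator and interpolating its polynomial-in-block $L^{p}$ estimates against the geometric $L^{2}$ decay.

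The main obstacle, as always with rough kernels, is making the Fourier-side estimates for $\widehat{K_{j}}$ genuinely uniform in the cap parameters in the degenerate regime where $\xi$ is nearly orthogonal to the cap, so that the cancellation recovered across the logarithmically many transitional scales sums to a constant rather than to a multiple of $\log(1/\delta)$; and this is compounded by the fact that the commutator coefficient $\nabla a(sx+(1-s)y)$ is merely bounded, hence invisible to the Fourier transform, so it must be carried through the $s$-averaging and the dyadic freezing, with the regimes where $s$ is close to $0$ or to $1$ requiring separate care. Everything else — the even part, the atomic decomposition, the moment correction, and the passage from $L^{2}$ to general $L^{p}$ — is comparatively routine.
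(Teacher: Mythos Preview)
The paper does not give its own proof of this theorem: it is quoted as a background result due to Pan, Wu and Yang \cite{pwy}, so there is no in-paper argument to compare your proposal against. Your even-part treatment via the method of rotations and reduction to the one-dimensional Calder\'on commutator is correct and standard.

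The odd-part argument, however, has a genuine gap. The representation $T_{b,a}f=\int_0^1 L^{s}f\,ds$ is a tautology: integrating the $s$-average of $\nabla a$ along the segment just reproduces $(a(x)-a(y))/|x-y|$ by the fundamental theorem of calculus, so no reduction has occurred. For $s\in(0,1)$ the operator $L^{s}$ has kernel $|x-y|^{-d}\,\vec b(\theta)\cdot\nabla a\big((1-s)y+sx\big)$, a non-convolution kernel with a merely bounded coefficient depending on both variables. ``Freezing the coefficient along dyadic annuli'' does not make sense here: on the annulus $|x-y|\sim 2^{j}$ the coefficient still varies with both $x$ and $y$, and replacing it by any constant incurs an $O(1)$ error you cannot control because $\nabla a$ has no continuity. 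All of the Fourier-side estimates you describe for $\widehat{K_{j}}$ pertain to the \emph{convolution} operator $T_{\vec b}$; they say nothing about $L^{s}$ or $T_{b,a}$, and the passage between the two is exactly the hard step your outline leaves open. A scheme that does work --- and is what the present paper carries out in Section~2 for the $GS_{\beta}$ setting, following Hofmann --- is to localize the scale-$2^{j}$ piece of the commutator to cubes $I_{j,l}$ of side $2^{j}$ and replace $a$ by a cutoff linearization $a_{j,l}$ there; this turns $T_{\Omega,a}^{j}$ into $a_{j,l}W_{\Omega,j}-W_{\Omega,j}(a_{j,l}\,\cdot\,)$, a combination of genuine convolution operators to which the Littlewood--Paley and Fourier decay estimates then legitimately apply. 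It is this spatial localization, not the $s$-averaging, that bridges the non-convolution commutator and the Fourier machinery.
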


Chen, Ding and Hong \cite{cdh} showed that the converse of Theorem \ref{thm1.-1} is also true. Precisely, Chen at al. \cite[p.\,1501]{cdh} established the following result.
\begin{theorem}
Let $\Omega$ be homogeneous of degree zero, $\Omega\in {\rm Lip}_{\alpha}(S^{d-1})$ for some $\alpha\in (0,\,1]$, and satisfy the vanishing moment (\ref{equa:1.1}) with $k=1$, $a\in L_{{\rm loc}}^1(\mathbb{R}^d)$. If $T_{\Omega,\,a}$ is bounded on $L^p(\mathbb{R}^d)$ for some $p\in (1,\,\infty)$, then $\nabla a\in L^{\infty}(\mathbb{R}^d)$.
\end{theorem}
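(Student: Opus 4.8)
My plan is to prove the quantitative bound $\|\nabla a\|_{L^\infty}\le c^{-1}\|T_{\Omega,a}\|_{L^p\to L^p}$ with a constant $c=c(d,p,\Omega)>0$, by freezing the symbol $a$ at an arbitrary point to its first-order Taylor polynomial and recognizing the frozen operator as a homogeneous Calder\'on--Zygmund convolution operator whose $L^p$-norm controls the gradient of $a$ at that point.

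First I would reduce to smooth symbols. For a standard mollifier $\phi_\varepsilon$ put $a_\varepsilon=a\ast\phi_\varepsilon$, which is smooth (so $T_{\Omega,a_\varepsilon}$ is at least a priori meaningful on $C_c^\infty$ via the vanishing moment \eqref{equa:1.1}). Writing $a_\varepsilon(x)-a_\varepsilon(y)=\int\big(a(x-z)-a(y-z)\big)\phi_\varepsilon(z)\,dz$ and using
\[
\int_{\mathbb{R}^d}\frac{\Omega(x-y)}{|x-y|^{d+1}}\big(a(x-z)-a(y-z)\big)f(y)\,dy=\big(T_{\Omega,a}(f(\cdot-z))\big)(x-z),
\]
one gets $T_{\Omega,a_\varepsilon}f=\int\phi_\varepsilon(z)\,\big(T_{\Omega,a}(f(\cdot-z))\big)(\cdot-z)\,dz$, hence $\|T_{\Omega,a_\varepsilon}\|_{L^p\to L^p}\le\|T_{\Omega,a}\|_{L^p\to L^p}$ by Minkowski's inequality. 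Thus it suffices to prove the displayed bound for smooth symbols; applying it to $a_\varepsilon$ makes the family $\{a_\varepsilon\}$ uniformly Lipschitz, and since $a_\varepsilon\to a$ in $L^1_{\mathrm{loc}}$ the limit $a$ then agrees a.e.\ with a Lipschitz function, i.e.\ $\nabla a\in L^\infty$.

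The heart of the argument is the freezing. Let $b$ be smooth and fix a point, which after a translation (the operator norm being translation invariant) may be taken to be $0$. For $\lambda>0$ set $b_\lambda(x)=\lambda^{-1}b(\lambda x)$; a change of variables using the degree-zero homogeneity of $\Omega$ gives $T_{\Omega,b_\lambda}=\delta_\lambda\,T_{\Omega,b}\,\delta_{1/\lambda}$, where $\delta_\lambda g(x)=g(\lambda x)$, and since $\|\delta_\lambda\|_{L^p\to L^p}\|\delta_{1/\lambda}\|_{L^p\to L^p}=1$ one gets $\|T_{\Omega,b_\lambda}\|_{L^p\to L^p}=\|T_{\Omega,b}\|_{L^p\to L^p}$ for every $\lambda$. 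As $\lambda\to0^+$ the constant $b(0)/\lambda$ cancels in the difference and $b_\lambda(x)-b_\lambda(y)\to\nabla b(0)\cdot(x-y)$ uniformly on compact sets; testing the bilinear forms on $f,g\in C_c^\infty(\mathbb{R}^d)$ and passing to the limit yields $\|T_{\Omega,\ell}\|_{L^p\to L^p}\le\|T_{\Omega,b}\|_{L^p\to L^p}$, where $\ell(x)=\nabla b(0)\cdot x$. Now $T_{\Omega,\ell}$ is convolution with the principal-value kernel $z\mapsto|z|^{-d}\,\Omega(z/|z|)\,\big(\nabla b(0)\cdot z/|z|\big)$, homogeneous of degree $-d$, whose angular part $\Omega(\theta)\big(\nabla b(0)\cdot\theta\big)$ has mean zero over $S^{d-1}$ exactly by \eqref{equa:1.1} with $k=1$ and is Lipschitz because $\Omega\in\mathrm{Lip}_\alpha(S^{d-1})$; hence $T_{\Omega,\ell}$ is a genuine Calder\'on--Zygmund operator, and by homogeneity $T_{\Omega,\ell}=|\nabla b(0)|\,T_{\Omega,\,e\cdot x}$ with $e=\nabla b(0)/|\nabla b(0)|\in S^{d-1}$ (assuming $\nabla b(0)\neq0$).

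Finally I would bound $\|T_{\Omega,e\cdot x}\|_{L^p\to L^p}$ from below, uniformly in $e\in S^{d-1}$. Since $\sup_{u\in S^{d-1}}\|T_{\Omega,u\cdot x}\|_{L^p\to L^p}<\infty$ (uniform $L^p$ bounds for convolution singular integrals with bounded Lipschitz angular part), the relation $T_{\Omega,e\cdot x}-T_{\Omega,e'\cdot x}=T_{\Omega,(e-e')\cdot x}$ shows $e\mapsto\|T_{\Omega,e\cdot x}\|_{L^p\to L^p}$ is Lipschitz on the compact sphere; if its infimum were $0$, a limit point $e_\ast$ would force $T_{\Omega,e_\ast\cdot x}=0$, i.e.\ $\Omega(\theta)(e_\ast\cdot\theta)=0$ for a.e.\ $\theta$, and since $\{e_\ast\cdot\theta=0\}$ is a null set this would give $\Omega\equiv0$, which is excluded. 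So $c:=\inf_{e\in S^{d-1}}\|T_{\Omega,e\cdot x}\|_{L^p\to L^p}>0$, whence $|\nabla b(0)|\le c^{-1}\|T_{\Omega,b}\|_{L^p\to L^p}$, and the point being arbitrary this is the desired bound. I expect the main obstacle to be the limiting step in the freezing: one needs a control of the near-diagonal part of the principal-value integrals that is uniform in $\lambda$, and this is exactly where \eqref{equa:1.1} is used together with $b_\lambda(x)-b_\lambda(y)=\nabla b_\lambda(x)\cdot(x-y)+O(|x-y|^2)$; it is also what makes the forms $\langle T_{\Omega,b}f,g\rangle$ a priori meaningful. (Throughout, $\Omega$ is of course assumed not identically zero.)
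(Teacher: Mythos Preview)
The paper does not give a proof of this theorem; it is quoted as a background result of Chen, Ding and Hong \cite{cdh}, so there is nothing in the present paper to compare your argument against.

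Your proposal stands on its own and is essentially correct: the mollification step, the scaling identity $T_{\Omega,b_\lambda}=\delta_\lambda T_{\Omega,b}\delta_{1/\lambda}$, the freezing limit, and the compactness argument on $S^{d-1}$ all check out, and you have correctly identified that the only delicate point is the passage to the limit in the bilinear form. That step is handled by the uniform boundedness of $\nabla b_\lambda=(\nabla b)(\lambda\,\cdot)$ and the decay $\nabla^2 b_\lambda=\lambda(\nabla^2 b)(\lambda\,\cdot)\to0$ on compacta, together with the cancellation (\ref{equa:1.1}), which makes the near-diagonal contributions uniformly small. Two minor remarks: in the mollification identity the inner translate should be $f(\cdot+z)$ rather than $f(\cdot-z)$ (this does not affect the Minkowski bound); and the implicit hypothesis $\Omega\not\equiv0$ is of course necessary, since otherwise $T_{\Omega,a}\equiv0$ and the conclusion fails.
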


Hofmann \cite{hof1}
considered the weighted $L^p$ boundedness with $A_p$ weights for $T_{\Omega,a;\,k}$, and proved that if $\Omega\in L^{\infty}(S^{d-1})$ and satisfies (\ref{equa:1.1}), then for $p\in (1,\,\infty)$ and $w\in A_p(\mathbb{R}^d)$,  $T_{\Omega,a;k}$ is bounded on $L^p(\mathbb{R}^d,\,w)$, where and in the following, $A_p(\mathbb{R}^d)$ denotes the weight function class of Muckenhoupt, see \cite[Chap. 9]{gra} for the definition and properties of $A_p(\mathbb{R}^d)$. Ding and Lai \cite{dinglai} considered the weak type endpoint estimate for $T_{\Omega, a}$, and proved that $\Omega\in L\log L(S^{d-1})$ is a sufficient condition such that $T_{\Omega,a}$ is bounded from $L^1(\mathbb{R}^d)$ to $L^{1,\,\infty}(\mathbb{R}^d)$.

For $\beta\in [1,\,\infty)$, we say that $\Omega\in GS_{\beta}(S^{d-1})$ if $\Omega\in L^1(S^{d-1})$ and
\begin{eqnarray}\label{equation1.3}
\sup_{\zeta\in S^{d-1}}\int_{S^{d-1}}|\Omega(\theta)|\log ^{\beta}\big(\frac{1}{|\zeta\cdot \theta|}\big)d\theta<\infty.
\end{eqnarray}
The condition (\ref{equation1.3}) was introduced by Grafakos
and Stefanov \cite{gras} in order to study the $L^p(\mathbb{R}^d)$ boundedness for the homogeneous singular integral operator defined by
\begin{eqnarray}\label{equation1.4}T_{\Omega}f(x)={\rm p.\, v.}\int_{\mathbb{R}^d}\frac{\Omega(x-y)}{|x-y|^d}f(y)dy,
\end{eqnarray}
where $\Omega$ is homogeneous of degree zero and has mean value zero  on $S^{d-1}$. Obviously,
$L(\log L)^{\beta}(S^{d-1})\subset GS_{\beta}(S^{d-1})$. On the other hand, as it was pointed out in \cite{gras},
there exist integrable functions
on $S^{d-1}$
which are not in $H^1(S^{d-1})$   but satisfy (\ref{equation1.3}) for all $\beta\in (1,\,\infty)$. Thus, it is of interest to consider the $L^p(\mathbb{R}^d)$ boundedness for operators such as $T_{\Omega}$ and $T_{\Omega,a;\,k}$ when $\Omega\in GS_{\beta}(S^{d-1})$.
Grafakos
and Stefanov \cite{gras} proved that if $\Omega\in GS_{\beta}(S^{d-1})$ for some $\beta\in (1,\,\infty]$, then $T_{\Omega}$ is bounded on $L^p(\mathbb{R}^d)$ for $1+1/\beta<p<1+\beta$. Fan, Guo and Pan \cite{fgp} improved the result of \cite{gras}, and proved the following result.

\begin{theorem}\label{thm1.0}
Let $\Omega$ be homogeneous of degree zero, integrable and have mean value zero  on $S^{d-1}$. Suppose that $\Omega\in GS_{\beta}(S^{d-1})$ with $\beta\in (1,\,\infty)$, then for $\frac{2\beta}{2\beta-1}<p<2\beta$, $T_{\Omega}$ is bounded on $L^p(\mathbb{R}^d)$.
\end{theorem}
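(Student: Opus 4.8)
The plan is to carry out the Grafakos--Stefanov scheme for $T_{\Omega}$, but feed it a sharpened weighted $L^{2}$ input. Since $T_{\Omega}$ is a convolution operator I would write $T_{\Omega}f=\sigma\ast f$ with $\sigma={\rm p.\,v.}\,\Omega(x/|x|)\,|x|^{-d}$ and split $\sigma=\sum_{j\in\mathbb Z}\sigma_{j}$ dyadically, $\sigma_{j}(x)=\Omega(x/|x|)\,|x|^{-d}\chi_{\{2^{j}\le|x|<2^{j+1}\}}(x)$; since $\Omega$ has mean value zero, $\int_{\mathbb R^{d}}\sigma_{j}=0$, so $\widehat{\sigma_{j}}(0)=0$ and $\widehat{\sigma_{j}}(\xi)=\widehat{\sigma_{0}}(2^{j}\xi)$. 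The first step would be the three Fourier bounds
\begin{equation*}
|\widehat{\sigma_{0}}(\xi)|\lesssim\|\Omega\|_{L^{1}(S^{d-1})},\qquad |\widehat{\sigma_{0}}(\xi)|\lesssim|\xi|,\qquad |\widehat{\sigma_{0}}(\xi)|\lesssim\log^{-\beta}\!\big(2+|\xi|\big),
\end{equation*}
the second from $\int\sigma_{0}=0$ and $|e^{-2\pi iz\cdot\xi}-1|\lesssim|\xi|$ for $1\le|z|<2$, and the third --- the only place \eqref{equation1.3} enters --- from $\widehat{\sigma_{0}}(\xi)=\int_{S^{d-1}}\Omega(\theta)\int_{1}^{2}e^{-2\pi ir\theta\cdot\xi}\tfrac{dr}{r}\,d\theta$ by bounding the inner integral by $\min(1,|\theta\cdot\xi|^{-1})$ and splitting $S^{d-1}$ into $\{|\theta\cdot\xi|>|\xi|^{1-\delta}\}$, on which $\min(1,|\theta\cdot\xi|^{-1})\le|\xi|^{\delta-1}$, and its complement, on which $\log(1/|\theta\cdot(\xi/|\xi|)|)\ge\delta\log|\xi|$ so that \eqref{equation1.3} gives $\int|\Omega(\theta)|\,d\theta\lesssim(\delta\log|\xi|)^{-\beta}$, for a small $\delta>0$.

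These combine to $|\widehat{\sigma}(\xi)|=\big|\sum_{j}\widehat{\sigma_{0}}(2^{j}\xi)\big|\lesssim\sum_{2^{j}|\xi|\le1}2^{j}|\xi|+\sum_{2^{j}|\xi|>1}\log^{-\beta}(2+2^{j}|\xi|)\lesssim1$ exactly because $\beta>1$, so $T_{\Omega}$ is bounded on $L^{2}(\mathbb R^{d})$; observe that $2$ lies in $(\tfrac{2\beta}{2\beta-1},2\beta)$ precisely when $\beta>1$. For $p\neq2$ the target is the power-weighted estimate
\begin{equation*}
\|T_{\Omega}f\|_{L^{2}(|x|^{\gamma}dx)}\lesssim\|f\|_{L^{2}(|x|^{\gamma}dx)},\qquad |\gamma|<d\Big(1-\tfrac1\beta\Big),
\end{equation*}
to be proved from the dyadic decomposition by estimating, for each $j$, convolution with $\sigma_{j}$ on $L^{2}(|x|^{\gamma}dx)$ --- using the cancellation $\int\sigma_{j}=0$ to treat the part near the origin (where $|x|^{\gamma}$ is far from constant over the support of $\sigma_{j}(x-\cdot)$), and the $\log^{-\beta}$ Fourier decay to treat the rest --- so that the sum over $j$ converges in exactly the stated range of $\gamma$. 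A standard power-weight interpolation (Stein--Weiss) argument would then upgrade boundedness on $L^{2}(|x|^{\gamma}dx)$ for all $|\gamma|<d(1-1/\beta)$ to boundedness on $L^{p}(\mathbb R^{d})$ for precisely the $p$ with $|\tfrac1p-\tfrac12|<\tfrac12(1-\tfrac1\beta)$, that is, $\tfrac{2\beta}{2\beta-1}<p<2\beta$.

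I expect the main obstacle to be the \emph{sharpness} of the weighted $L^{2}$ estimate. The power-weight range has to be pushed all the way up to $|\gamma|<d(1-1/\beta)$: a strictly smaller range yields only a strictly smaller $p$-interval --- this is exactly why Grafakos and Stefanov reach only $1+1/\beta<p<1+\beta$ --- and pushing it that far forces the $\log^{-\beta}$ decay of $\widehat{\sigma_{0}}$ to be used with no slack when the dyadic pieces are summed against the weight, in the regime near the origin where the weight is not slowly varying and only the mean-zero cancellation $\int\sigma_{j}=0$ saves the day; the borderline convergence $\sum_{n}n^{-\beta}<\infty$, once more demanding $\beta>1$, is what pins down the endpoint. (A closely related route avoids weights altogether, decomposing each $\sigma_{j}$ by a Littlewood--Paley partition at its own frequency scale, in the style of Duoandikoetxea and Rubio de Francia, and interpolating the resulting operator-norm bounds, which decay like $2^{-s}$ and $(1+|s|)^{-\beta}$ in the scale offset $s$; either way the $\beta$-dependence of the decay is what produces the range.)
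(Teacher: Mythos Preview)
The paper does not prove Theorem~\ref{thm1.0}; it is quoted from Fan, Guo and Pan \cite{fgp} as background. So there is no in-paper argument to compare against, only the cited one.

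On your proposal itself: your primary route---power-weighted $L^{2}$ bounds on $L^{2}(|x|^{\gamma}dx)$ followed by Stein--Weiss interpolation---is exactly the Grafakos--Stefanov scheme, and you yourself note that as written it reaches only $1+1/\beta<p<1+\beta$. You then say the fix is to push the weight range to $|\gamma|<d(1-1/\beta)$, but you supply no mechanism for doing so: the outline you give (``cancellation $\int\sigma_{j}=0$ near the origin, $\log^{-\beta}$ Fourier decay for the rest'') \emph{is} the Grafakos--Stefanov computation, and there is no unused slack in it. Naming the obstacle is not the same as removing it, and this is a genuine gap in the main line of your argument.

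The route that actually gives the full range $\frac{2\beta}{2\beta-1}<p<2\beta$ is the one you relegate to a parenthesis. In \cite{fgp} one writes $T_{\Omega}=\sum_{s\in\mathbb Z}V_{s}$ with $V_{s}f=\sum_{j}S_{j+s}^{2}(\sigma_{j}\ast f)$ for a smooth Littlewood--Paley family $\{S_{l}\}$, and uses (i) the Fourier bounds you wrote down to get $\|V_{s}\|_{L^{2}\to L^{2}}\lesssim (1+s)^{-\beta}$ for $s\ge0$ and $\|V_{s}\|_{L^{2}\to L^{2}}\lesssim 2^{s}$ for $s<0$, together with (ii) the uniform bound $\|V_{s}\|_{L^{q}\to L^{q}}\lesssim 1$ for every $1<q<\infty$, which follows from vector-valued Littlewood--Paley theory and $M_{\Omega}$. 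Interpolating (i) and (ii) yields $\|V_{s}\|_{L^{p}\to L^{p}}\lesssim (1+|s|)^{-2\beta/p+\epsilon}$ for $p>2$, summable precisely when $p<2\beta$; duality handles $p<2$. If you promote this from a remark to the main argument and fill in (i)--(ii), your proof is complete; the power-weight route, as you have sketched it, is not.
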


The purpose of this paper is to establish  the $L^p(\mathbb{R}^d)$ boundedness of $T_{\Omega,\,a;k}$ when $\Omega\in GS_{\beta}(S^{d-1})$ for some $\beta>1$. Our main result can be stated as follows.

\begin{theorem}\label{thm1.1} Let $k\in\mathbb{N}$, $\Omega$ be homogeneous of degree zero, satisfy the vanishing moment (\ref{equa:1.1}), $a$ be a   function on $\mathbb{R}^d$ such that $\nabla a\in L^{\infty}(\mathbb{R}^d)$. Suppose that $\Omega\in GS_{\beta}(S^{d-1})$ with $\beta\in (1,\,\infty)$,
Then for $\frac{2\beta}{2\beta-1}<p<2\beta$, $T_{\Omega,\,a;\,k}$ is bounded on $L^p(\mathbb{R}^d)$.
\end{theorem}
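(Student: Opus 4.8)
The plan is to reduce the Calderón commutator $T_{\Omega,a;k}$ to the homogeneous singular integral setting covered by Theorem~\ref{thm1.0}, at the price of a family of operators indexed by direction vectors, and then control the direction-dependent constants by the $GS_\beta$ hypothesis. The standard device is Coifman--Meyer's observation that, since $\nabla a\in L^\infty$, one may write $a(x)-a(y)=(x-y)\cdot\int_0^1\nabla a\big(y+t(x-y)\big)\,dt$; iterating this $k$ times expresses $\big(a(x)-a(y)\big)^k$ as an average over $t_1,\dots,t_k\in[0,1]$ of products $\prod_{j}(x-y)\cdot\nabla a(y+t_j(x-y))$. After a change of variables this exhibits $T_{\Omega,a;k}$ as an average (against a probability measure in the $t_j$'s) of operators whose kernels are of the form $\widetilde\Omega_{t}(x-y)/|x-y|^d$ with $\widetilde\Omega_t$ still homogeneous of degree zero; the vanishing-moment condition \eqref{equa:1.1} of order $k$ is exactly what guarantees that the resulting $\widetilde\Omega_t$ has mean value zero on $S^{d-1}$, so Theorem~\ref{thm1.0} is applicable to each. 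One then needs the $GS_\beta$-type bound for $\widetilde\Omega_t$ to be uniform in $t$, which follows because $\widetilde\Omega_t$ differs from $\Omega$ only by a bounded factor coming from $\nabla a$ and a bounded reparametrisation of the sphere.

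Concretely, I would carry this out via the Fourier transform / Littlewood--Paley machinery already signalled in the keywords, rather than by quoting Theorem~\ref{thm1.0} as a black box, because the uniformity in the auxiliary parameters is cleanest there. The key steps, in order: (1) perform the Coifman--Meyer reduction above and, after dyadic decomposition of the kernel in $|x-y|$, write $T_{\Omega,a;k}f=\sum_{j\in\mathbb Z}K_j*f$ (for fixed values of the parameters), where $\widehat{K_j}$ is a smooth multiplier supported essentially at frequency $|\xi|\sim 2^{-j}$; (2) establish the two basic estimates on each $\widehat{K_j}$: a decay estimate $|\widehat{K_j}(\xi)|\lesssim (2^j|\xi|)^{-\delta}$ for small $\delta>0$ coming from the mean-value-zero cancellation, and, crucially, a Grafakos--Stefanov-type estimate $|\widehat{K_j}(\xi)|\lesssim \log^{-\beta}(2+2^{-j}|\xi|)$ valid on the complementary range, obtained by splitting the sphere into the cap where $|\xi'\cdot\theta|$ is small and its complement and using \eqref{equation1.3}; (3) interpolate these two bounds to get, for the square function $\big(\sum_j|K_j*f|^2\big)^{1/2}$, an $L^2\to L^2$ operator-norm control that is summable in a way that yields boundedness on $L^p$ precisely for $\frac{2\beta}{2\beta-1}<p<2\beta$ — this is the same arithmetic that produces the exponent range in Theorem~\ref{thm1.0}; (4) invoke a Calderón reproducing formula / Littlewood--Paley argument to pass from the $L^2$ square-function bounds to the full $L^p$ statement, handling the off-diagonal terms by the decay estimate in (2); (5) finally, integrate over the parameters $t_1,\dots,t_k$ and use $\|\nabla a\|_\infty<\infty$ to conclude, the point being that every constant produced in steps (1)--(4) depended on $\Omega$ only through $\|\Omega\|_{L^1(S^{d-1})}$ and the $GS_\beta$ quantity in \eqref{equation1.3}, and on $a$ only through $\|\nabla a\|_\infty$.

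The main obstacle I anticipate is step (2), the uniform Fourier-decay estimates on $\widehat{K_j}$ after the commutator has been reduced: the presence of the factor $(a(x)-a(y))^k$, even in its integrated Coifman--Meyer form, means the "symbol" one must estimate is not simply $\widehat{\Omega/|\cdot|^d}$ but a perturbation of it involving the extra variables $t_j$ and the direction along which $\nabla a$ is sampled, so the oscillatory-integral estimate on $S^{d-1}$ must be shown to hold with constants independent of those extra variables. A secondary technical point is that $\Omega\in GS_\beta$ is much weaker than $\Omega\in L\log L$ or $\Omega\in H^1$, so one cannot use the usual $H^1(S^{d-1})$ atomic decomposition employed in the proof of Theorem~\ref{thm1.-1}; the cancellation must be extracted directly from \eqref{equation1.3} via the cap decomposition, and keeping track of the logarithmic gains through the interpolation is what pins down the sharp $p$-range. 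I would also expect to need an approximation argument (smoothing $\Omega$, or truncating the kernel) to justify the formal manipulations and the p.v.\ definition, as suggested by the keyword list.
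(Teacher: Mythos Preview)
Your proposal has a genuine gap at the very first step, and it is one the paper explicitly flags. The Coifman--Meyer identity
\[
a(x)-a(y)=\int_0^1 (x-y)\cdot\nabla a\bigl(y+t(x-y)\bigr)\,dt
\]
does \emph{not} reduce $T_{\Omega,a;k}$ to an average of convolution operators: for each fixed $t$ the factor $\nabla a\bigl(y+t(x-y)\bigr)$ depends on $x$ and $y$ separately, not on $x-y$ alone. Consequently the resulting kernel is not of the form $\widetilde\Omega_t(x-y)/|x-y|^d$, there is no well-defined $\widehat{K_j}(\xi)$ to estimate, and Theorem~\ref{thm1.0} cannot be invoked. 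Your later steps (2)--(4), which are written in terms of $\widehat{K_j}$ and square functions of the form $\sum_j|K_j*f|^2$, therefore have no object to act on. The paper says this directly in the introduction: ``$T_{\Omega,a;k}$ is not a convolution operator, and the argument in \cite{gras,fgp} does not apply to $T_{\Omega,a;k}$ directly.'' The obstacle you anticipate in step (2) is real, but it is not a matter of uniformity in the auxiliary parameters; it is that the Fourier-multiplier framework collapses entirely.

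The paper's proof is organised quite differently. For $L^2$, it keeps the non-convolution structure and inserts the Calder\'on reproducing formula $\int_0^\infty Q_s^4\,ds/s=I$ on both sides, then splits the $(s,t,j)$-region into three pieces $\mathrm D_1,\mathrm D_2,\mathrm D_3$. On each piece it \emph{localises} to cubes of scale $2^j$ and replaces $a$ by $a_{j,l}(y)=(a(y)-a(x_{j,l}))\omega_{j,l}(y)$, which turns $T_{\Omega,a}^j$ into $a_{j,l}W_{\Omega,j}-W_{\Omega,j}(a_{j,l}\,\cdot\,)$ with $W_{\Omega,j}$ a genuine convolution; the $GS_\beta$ hypothesis then enters via the Grafakos--Stefanov Fourier bound on $W_{\Omega,j}$ (Lemma~\ref{lem2.3}). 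For $L^p$, the paper does not interpolate a square-function estimate but instead approximates $T_{\Omega,a;k}$ by operators $[a,R_l]^k$ whose kernels are $\sum_j K_j*\omega_{j-l}$; these are smooth enough to satisfy a H\"ormander condition with constant $O(l)$, while the difference $T_{\Omega,a;k}-[a,R_l]^k$ is controlled in $L^2$ by $l^{-\varepsilon\beta+1}$ via a multiplier lemma for iterated commutators (Theorem~\ref{thm0.21}). Interpolating these two bounds and summing the telescoping series $\sum_l([a,R_{2^{l+1}}]^k-[a,R_{2^l}]^k)$ gives the range $\frac{2\beta}{2\beta-1}<p<2\beta$.
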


Different from the operator $T_{\Omega}$ defined by (\ref{equation1.4}), $T_{\Omega,a;\,k}$ is not a convolution operator, and the argument in \cite{gras,fgp} does not apply to $T_{\Omega,\,a;\,k}$ directly. To prove Theorem \ref{thm1.1}, we will first prove the $L^2(\mathbb{R}^d)$ boundedness of $T_{\Omega,a;k}$ by employing the ideas used in \cite{hof1}, together with some new localizations and decompositions.  The  argument in the proof of $L^2(\mathbb{R}^d)$ boundedness is based on a refined decomposition appeared in (\ref{equation2.decomp}). To prove the $L^p(\mathbb{R}^d)$ boundedness of $T_{\Omega,a;\,k}$, we will introduce a suitable approximation to $T_{\Omega,a;\,k}$ by a sequence of integral operators, whose kernels enjoy H\"ormander's condition.
We remark that the idea approximating  rough convolution operators by smooth operators was originated by Watson \cite{wat}.

In what follows, $C$ always denotes a
positive constant that is independent of the main parameters
involved but whose value may differ from line to line. We use the
symbol $A\lesssim B$ to denote that there exists a positive constant
$C$ such that $A\le CB$.  Constant with subscript such as $C_1$,
does not change in different occurrences. For any set $E\subset\mathbb{R}^d$,
$\chi_E$ denotes its characteristic function.  For a cube
$Q\subset\mathbb{R}^d$ and $\lambda\in(0,\,\infty)$,
$\lambda Q$  denotes the cube with the same center as $Q$  whose
side length is $\lambda$ times that of $Q$. For a suitable function $f$, we denote $\widehat{f}$ the Fourier transform of $f$. For $p\in [1,\,\infty]$, $p'$ denotes the dual exponent of $p$, namely, $p'=p/(p-1)$.

\section{Proof of Theorem \ref{thm1.1}: $L^2(\mathbb{R}^d)$ boundedness}
This section is devoted to the proof of the $L^2(\mathbb{R}^d)$ boundedness of $T_{\Omega,a;\,k}$. For simplicity, we only consider the case $k=1$. As it was pointed out in \cite[Section 2]{hof1}, the argument in this section still works for all $k\in\mathbb{N}$, if we proceed by induction on the order $k$.

Let $\phi\in C^{\infty}_0(\mathbb{R}^d)$ be a radial function, ${\rm supp}\,\phi\subset B(0,\,2)$, $\phi(x)=1$ when $|x|\leq 1$. Set $\varphi(x)=\phi(x)-\phi(2x)$. We then have that
\begin{eqnarray}\label{eq2.varphi}
\sum_{j\in\mathbb{Z}}\varphi(2^{-j}x)\equiv 1,\,\,|x|>0.\end{eqnarray}
Let $\varphi_j(x)=\varphi(2^{-j}x)$ for $j\in\mathbb{Z}$.

For a function $\Omega\in L^1(S^{d-1})$, define the operator $W_{\Omega, j}$ by
\begin{eqnarray}\label{equation2.womega}
W_{\Omega,j}h(x)=\int_{\mathbb{R}^d}\frac{\Omega(x-y)}{|x-y|^{d+1}}\varphi_j(x-y)h(y)dy.
\end{eqnarray}

\begin{lemma}\label{lem2.2}Let $\Omega$ be homogeneous of degree zero, integrable on $S^{d-1}$, satisfy the vanishing moment (\ref{equa:1.1}) with $k=1$ and $\Omega\in GS_{\beta}(S^{d-1})$ for some $\beta\in (1,\,\infty)$, $a$ be a function  on $\mathbb{R}^d$ such that $\nabla a\in L^{\infty}(\mathbb{R}^d)$. Then for any $r\in (0,\,\infty)$, functions ${\eta}_1,\,{\eta}_2\in C^{\infty}_0(\mathbb{R}^d)$ which are supported on balls of radius no larger than $r$,
$$\Big|\int_{\mathbb{R}^d}\eta_2(x)T_{\Omega,\,a}\eta_1(x)dx\Big|\lesssim \|\Omega\|_{L^1(S^{d-1})}
			r^{-d}\prod_{j=1}^2\big(\|\eta_j\|_{L^{\infty}(\mathbb{R}^d)}+r\|\nabla \eta_j\|_{L^{\infty}(\mathbb{R}^d)}\big).
$$
\end{lemma}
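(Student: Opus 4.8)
The plan is to estimate the bilinear form $\int \eta_2 T_{\Omega,a}\eta_1$ directly from its definition, exploiting the fact that both test functions live on balls of radius $\le r$. First I would write
$$\int_{\mathbb{R}^d}\eta_2(x)T_{\Omega,a}\eta_1(x)\,dx = \mathrm{p.v.}\int_{\mathbb{R}^d}\int_{\mathbb{R}^d}\frac{\Omega(x-y)}{|x-y|^{d+1}}\bigl(a(x)-a(y)\bigr)\eta_1(y)\eta_2(x)\,dy\,dx,$$
and split the $(x-y)$-integration dyadically using \eqref{eq2.varphi}: the integrand is supported where $x$ and $y$ both lie in a fixed ball of radius $\lesssim r$ (after translating, in the intersection of the two supporting balls, so $|x-y|\lesssim r$), hence only the scales $2^j \lesssim r$ contribute. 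For the small scales $2^j$ one uses the mean-zero cancellation coming from \eqref{equa:1.1}: since $\nabla a\in L^\infty$, the factor $a(x)-a(y) = \nabla a(x)\cdot(x-y) + O(|x-y|\,\omega(|x-y|))$ is, to leading order, linear in $x-y$, and the linear part paired against $\Omega(x-y)|x-y|^{-d-1}\varphi_j(x-y)$ integrates to something controlled by the vanishing first moment of $\Omega$ together with the smoothness of $\eta_2$. Concretely, on the $j$-th piece I would freeze $x$, write $\int \frac{\Omega(x-y)}{|x-y|^{d+1}}(a(x)-a(y))\varphi_j(x-y)\eta_1(y)\,dy$, Taylor-expand $\eta_1(y)$ around $x$ and use $(a(x)-a(y)) = \nabla a(\xi)\cdot(y-x)$; the zeroth-order term in $\eta_1$ then carries the odd factor $(y-x)$ against $\Omega$ and needs the moment condition, the first-order term in $\eta_1$ gains a factor $2^j\|\nabla\eta_1\|_\infty$.

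The key quantitative bookkeeping is: each dyadic piece $W_{\Omega,j}$ applied in this bilinear form contributes at most $C\|\Omega\|_{L^1(S^{d-1})}\|\nabla a\|_\infty\, \min\{1, 2^j/r\}\cdot 2^{-?}$ — more precisely, after extracting the scale, one gets a bound of the shape $\|\Omega\|_{L^1}\|\nabla a\|_\infty\bigl(\|\eta_2\|_\infty + 2^j\|\nabla\eta_2\|_\infty\bigr)\bigl(\|\eta_1\|_\infty + 2^j\|\nabla\eta_1\|_\infty\bigr)\cdot (\text{volume factor})$. Since the supports have radius $\le r$, the $x$-integration over the support of $\eta_2$ contributes a volume factor $\sim r^d$ if one estimates crudely, but one must instead integrate $|x-y|^{-d-1}\cdot|x-y| = |x-y|^{-d}$ over the annulus $|x-y|\sim 2^j$ which gives $O(1)$ per scale in the $y$-variable, then the remaining $x$-integral against $|\eta_2(x)|$ over a ball of radius $r$ gives the factor $r^{-d}\cdot(\text{stuff})$ only after one also uses that $\eta_1$ is normalized in the bracketed norm — the cleanest route is to bound $|T_{\Omega,a}\eta_1(x)|$ pointwise on the support of $\eta_2$ by $C\|\Omega\|_{L^1}\|\nabla a\|_\infty r^{-d}(\|\eta_1\|_\infty + r\|\nabla\eta_1\|_\infty)$ using cancellation at small scales and the support restriction at large scales, and then integrate against $|\eta_2(x)|\le \|\eta_2\|_\infty$ over its ball of radius $r$. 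Summing the geometric series in $j$ (small scales summable because of the extra power of $2^j$ from Taylor expansion; large scales absent because $2^j\gtrsim r$ is empty) yields the claim.

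I expect the main obstacle to be handling the borderline scales $2^j\sim r$ and making the pointwise bound on $T_{\Omega,a}\eta_1(x)$ uniform in $x$ across the support of $\eta_2$: when $x$ is near the edge of $\mathrm{supp}\,\eta_2$ and $y$ near the edge of $\mathrm{supp}\,\eta_1$, the distance $|x-y|$ can be as large as $\sim r$ but also arbitrarily small, so one genuinely needs both mechanisms (moment cancellation for $|x-y|\ll r$, geometric support truncation for $|x-y|\gtrsim r$) and must check that the transition between them loses at most a constant. A secondary technical point is justifying the principal-value interchange of integrals and the Taylor expansion of $a$ with only $\nabla a\in L^\infty$ (no higher regularity), which forces one to keep the remainder in the mean-value form $a(x)-a(y)=(x-y)\cdot\int_0^1\nabla a(x+t(y-x))\,dt$ rather than a second-order expansion; this is routine but must be done carefully to avoid hidden dependence on $\|a\|_\infty$.
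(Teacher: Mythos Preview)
Your direct dyadic approach has a genuine gap at the zeroth-order term. After Taylor-expanding $\eta_1(y)$ about $x$, the zeroth-order contribution at scale $j$ is $\eta_1(x)\,T^j_{\Omega,a}1(x)$, where
\[
T^j_{\Omega,a}1(x)=\int \frac{\Omega(z)}{|z|^{d+1}}\bigl(a(x)-a(x-z)\bigr)\varphi_j(z)\,dz
=\int \frac{\Omega(z)}{|z|^{d+1}}\,z\cdot\Bigl(\int_0^1\nabla a(x-tz)\,dt\Bigr)\varphi_j(z)\,dz.
\]
The moment condition $\int_{S^{d-1}}\Omega(\theta)\theta_m\,d\theta=0$ lets you subtract the constant vector $\nabla a(x)$, but the remainder is $z\cdot\int_0^1[\nabla a(x-tz)-\nabla a(x)]\,dt$, which for $\nabla a\in L^\infty$ only is merely $O(|z|)$, not $o(|z|)$. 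Hence $|T^j_{\Omega,a}1(x)|\lesssim\|\Omega\|_{L^1}\|\nabla a\|_\infty$ with \emph{no} decay in $j$, and the sum over the infinitely many scales $2^j\le r$ diverges. You label this a ``secondary technical point'' that is ``routine'', but it is precisely the obstruction: without a modulus of continuity on $\nabla a$, freezing $x$ and invoking the moment condition yields no scalewise gain.

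The paper does not argue directly: it cites Hofmann's Lemma~2.5 in \cite{hof1}, whose proof rests on the $L^2(\mathbb{R}^d)$ boundedness of the convolution operators $T_{\Omega,m}$ in \eqref{equa2.tam}. These have mean-zero kernel on $S^{d-1}$ by \eqref{equa:1.1}, are bounded on $L^2$ under the $GS_\beta$ hypothesis, and that $L^2$ theory is what replaces the scale-by-scale summation. Your elementary route can be salvaged, but not by freezing $x$: carry out the $x$-integral first and write the zeroth-order piece as $\sum_m\int_0^1\!\int\frac{\Omega(z)z_m}{|z|^{d+1}}\varphi_j(z)\,G_m(tz)\,dz\,dt$ with $G_m(w)=\int\eta_2(x)\eta_1(x)\,\partial_m a(x-w)\,dx$. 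Since $G_m$ is Lipschitz with constant $\lesssim\|\nabla(\eta_1\eta_2)\|_{L^1}\|\nabla a\|_\infty$, the moment condition now produces the missing factor $2^j$ via $|G_m(tz)-G_m(0)|$, and the sum converges.
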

Recall that under the hypothesis of Lemma \ref{lem2.2}, the operator $T_{\Omega,m}$ defined by
\begin{eqnarray}\label{equa2.tam}T_{\Omega,m}f(x)={\rm p.\,v.}\int_{\mathbb{R}^d}\frac{\Omega(x-y)(x_m-y_m)}{|x-y|^{d+1}}f(y)dy,\,\,1\leq m\leq d
\end{eqnarray}
is bounded on $L^2(\mathbb{R}^d)$ (see \cite{gras}). Lemma \ref{lem2.2} can be proved by repeating the proof of Lemma 2.5 in \cite{hof1}.

Let $\psi\in C^{\infty}_0(\mathbb{R}^d)$ be a radial function, have integral zero and ${\rm supp}\,\psi\subset B(0,\,1)$. Let $Q_s$ be the operator defined by  $Q_sf(x)=\psi_s*f(x)$, where $\psi_s(x)=s^{-d}\psi(s^{-1}x)$. We assume that $$\int^{\infty}_0[\widehat{\psi}(s)]^4\frac{ds}{s}=1.$$
Then the Calder\'on reproducing formula
\begin{eqnarray}\label{equation2.reproducing}\int^{\infty}_0Q_s^4\frac{ds}{s}=I\end{eqnarray}
holds true. Also, the  Littlewood-Paley theory tells us that
\begin{eqnarray}\label{equa2.little}\Big\|\Big(\int_{0}^{\infty}|Q_sf|^2\frac{ds}{s}\Big)^{1/2}\Big\|_{L^2(\mathbb{R}^d)}\lesssim \|f\|_{L^2(\mathbb{R}^d)}.
\end{eqnarray}

For each fixed $j\in\mathbb{Z}$, set
$$
T_{\Omega,\,a}^{j}f(x)=\int_{\mathbb{R}^d}K_{j}(x,\,y)f(y)dy,
$$
where
$$K_{j}(x,\,y)=\frac{\Omega(x-y)}{|x-y|^{d+1}}(a(x)-a(y)\big)\varphi_j(|x-y|).
$$
\begin{lemma}\label{lem2.3}
Let $\Omega$ be homogeneous of degree zero, integrable on $S^{d-1}$ and $\Omega\in GS_{\beta}(S^{d-1})$ for some $\beta\in (1,\,\infty)$, then for $j\in\mathbb{Z}$ and $0<s\leq 2^j$,
$$\|Q_sW_{\Omega,\,j}f\|_{L^2(\mathbb{R}^d)}\lesssim 2^{-j}\log^{-\beta}(2^j/s+1)\|f\|_{L^2(\mathbb{R}^d)}.
$$
\end{lemma}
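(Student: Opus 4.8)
The plan is to exploit the fact that both $Q_s$ and $W_{\Omega,j}$ are convolution operators, so that $Q_sW_{\Omega,j}$ is convolution with a function whose Fourier transform is $\widehat{\psi}(s\xi)\,\widehat{K_j}(\xi)$, where $K_j(x)=|x|^{-d-1}\Omega(x)\varphi_j(x)$; by Plancherel it then suffices to bound this multiplier in $L^{\infty}(\mathbb{R}^d)$. Since $\Omega$ is homogeneous of degree zero, the change of variables $x\mapsto 2^{-j}x$ gives $K_j(x)=2^{-j(d+1)}K_0(2^{-j}x)$ with $K_0(x)=|x|^{-d-1}\Omega(x)\varphi(x)$, and hence $\widehat{K_j}(\xi)=2^{-j}\widehat{K_0}(2^j\xi)$. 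Writing $t=2^j/s\ge 1$ and substituting $\eta=2^j\xi$, the claimed inequality reduces to the uniform multiplier bound
$$\sup_{\eta\in\mathbb{R}^d}\big|\widehat{\psi}(\eta/t)\,\widehat{K_0}(\eta)\big|\lesssim\log^{-\beta}(1+t),\qquad t\ge 1.$$

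The heart of the matter is the frequency-decay estimate $|\widehat{K_0}(\xi)|\lesssim\log^{-\beta}(2+|\xi|)$ for all $\xi\in\mathbb{R}^d$. Passing to polar coordinates and using that $\varphi$ is radial, one writes $\widehat{K_0}(\xi)=\int_{S^{d-1}}\Omega(\theta)F(\theta\cdot\xi)\,d\theta$, where $F(u)=\int_{1/2}^{2}r^{-2}\varphi(r\theta)\,e^{-iru}\,dr$ is independent of $\theta$ and is the Fourier transform of a smooth function supported in $[1/2,2]$; integration by parts yields $|F(u)|\lesssim(1+|u|)^{-2}$, together with the trivial bound $|F(u)|\lesssim 1$. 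For $|\xi|=\lambda\ge 2$, write $\xi=\lambda\xi'$ with $\xi'\in S^{d-1}$ and split $S^{d-1}=E_1\cup E_2$, where $E_1=\{\theta:|\theta\cdot\xi'|\le\lambda^{-1/2}\}$. On $E_2$ one has $|\theta\cdot\xi|>\lambda^{1/2}$, so the corresponding contribution is $\lesssim\lambda^{-1}\|\Omega\|_{L^1(S^{d-1})}\lesssim\log^{-\beta}\lambda$; on $E_1$ one uses $\log^{\beta}(1/|\theta\cdot\xi'|)\ge\log^{\beta}(\lambda^{1/2})\approx\log^{\beta}\lambda$ together with the $GS_\beta$ hypothesis (\ref{equation1.3}) to bound the contribution by $C\log^{-\beta}\lambda\,\sup_{\zeta\in S^{d-1}}\int_{S^{d-1}}|\Omega(\theta)|\log^{\beta}(1/|\zeta\cdot\theta|)\,d\theta\lesssim\log^{-\beta}\lambda$. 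For $|\xi|\le 2$ the bound is immediate from $\|\Omega\|_{L^1(S^{d-1})}<\infty$. (Note that no vanishing moment is used here, and indeed $\widehat{K_0}(0)$ need not vanish.)

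Finally I would combine this with the behaviour of $\widehat\psi$ near the origin: since $\psi$ is smooth, compactly supported and has integral zero, $\widehat\psi(0)=0$ and $|\widehat\psi(\zeta)|\lesssim|\zeta|$, while also $|\widehat\psi(\zeta)|\lesssim 1$. For $t$ in a bounded subinterval of $[1,\infty)$ the desired bound is trivial, so assume $t$ large and split over $\eta$. If $|\eta|\le t^{1/2}$, then $|\widehat\psi(\eta/t)|\lesssim|\eta|/t\le t^{-1/2}$ and $|\widehat{K_0}(\eta)|\lesssim 1$, so the product is $\lesssim t^{-1/2}\lesssim\log^{-\beta}(1+t)$, using that $\log^{\beta}(1+t)\lesssim t^{1/2}$ on $[1,\infty)$. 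If $|\eta|>t^{1/2}$, then $\log(2+|\eta|)\gtrsim\log(1+t)$, so the frequency-decay estimate gives $|\widehat{K_0}(\eta)|\lesssim\log^{-\beta}(2+|\eta|)\lesssim\log^{-\beta}(1+t)$ while $|\widehat\psi(\eta/t)|\lesssim 1$. This proves the multiplier bound, and Plancherel's theorem then gives the lemma. The only genuine obstacle is the frequency-decay estimate for $\widehat{K_0}$, which is exactly where the $GS_\beta(S^{d-1})$ condition enters; the logarithmic gain comes from the balance in the splitting of $S^{d-1}$, the near-equatorial part being controlled by $GS_\beta$ and the rest by the smoothness of $\varphi$ (equivalently, the rapid decay of $F$). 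The remaining steps are routine, requiring only elementary comparisons between $\log(1+t)$, $\log(2+|\eta|)$ and powers of $t$.
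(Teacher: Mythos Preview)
Your proof is correct and follows essentially the same approach as the paper: both reduce via Plancherel to the pointwise multiplier bound, invoke the Grafakos--Stefanov decay estimate $|\widehat{K_{\Omega,j}}(\xi)|\lesssim 2^{-j}\log^{-\beta}(|2^j\xi|+1)$ (which the paper cites from \cite{gras} while you re-derive it), and combine it with $|\widehat{\psi}(s\xi)|\lesssim\min\{1,|s\xi|\}$. The only cosmetic difference is that the paper carries out a three-case analysis ($|2^j\xi|\le 1$; $|s\xi|\ge 1$; the intermediate dyadic case), whereas your rescaling to $\eta=2^j\xi$, $t=2^j/s$ and the split at $|\eta|=t^{1/2}$ gives a slightly cleaner two-case argument.
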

\begin{proof}Let $K_{\Omega,j}(x)=\frac{\Omega(x)}{|x|^{d+1}}\varphi_j(|x|)$. By Plancherel's theorem, it suffices to prove that
\begin{eqnarray}\label{equation2.7}|\widehat{\psi_s}(\xi)\widehat{K_{\Omega,j}}(\xi)|\lesssim 2^{-j}\log^{-\beta}(2^j/s+1).
\end{eqnarray}
As it was proved by Grafakos and Stefanov \cite[p. 458]{gras}, we know that
$$|\widehat{K_{\Omega,j}}(\xi)|\lesssim 2^{-j}\log^{-\beta}(|2^j\xi|+1).$$
On the other hand, it is easy to verify that
$$|\widehat{\psi_s}(\xi)|\lesssim \min\{1,\,|s\xi|\}.$$
Observe that (\ref{equation2.7}) holds true when $|2^j\xi|\leq 1$, since
$$|s\xi|\log^{-\beta}(2^j|\xi|+1)=\frac{s}{2^j}|2^j\xi|\log^{-\beta}(|2^j\xi|+1)\lesssim \frac{s}{2^j}\lesssim \log^{-\beta}(2^j/s+1).
 $$
 If $|s\xi|\geq 1$, we certainly have that
$$|\widehat{\psi_s}(\xi)\widehat{K_{\Omega,j}}(\xi)|\lesssim 2^{-j}\log^{-\beta}(2^j|\xi|+1)\lesssim 2^{-j}\log^{-\beta}(2^j/s+1).
$$
Now we assume that $s|\xi|<1$ and $|2^j\xi|>1$, and
$$2^{-k}2^j<s\leq 2^{-k+1}2^j,\,\,\,2^{k_1-1}<|\xi|\leq 2^{k_1}$$
for $k\in\mathbb{N}$ and $k_1\in \mathbb{Z}$ respectively. Then $j+k_1\in\mathbb{N}$, $j+k_1\leq k$ and
$$|s\xi|\log^{-\beta}(2^j|\xi|+1)\lesssim 2^{j-k+k_1}(j+k_1)^{-\beta}\lesssim k^{-\beta}\lesssim  \log^{-\beta}(2^j/s+1).
 $$
This verifies (\ref{equation2.7}).
\end{proof}

\begin{lemma}\label{lem2.4}Let $\Omega$ be homogeneous of degree zero, satisfy the vanishing moment (\ref{equa:1.1}) with $k=1$ and $\Omega\in GS_{\beta}(S^{d-1})$ for some $\beta\in (1,\,\infty)$, $a$ be a function on $\mathbb{R}^d$ with $\nabla a\in L^{\infty}(\mathbb{R}^d)$. Then
\begin{itemize}
\item[\rm (i)]$T_{\Omega, a}1\in{\rm BMO}(\mathbb{R}^d)$;
\item[\rm (ii)] for any $j\in\mathbb{Z}$ and $s\in (0,\,2^j]$;
$$\|Q_sT_{\Omega,a}^{j}1\|_{L^{\infty}(\mathbb{R}^d)}\lesssim \|\Omega\|_{L^1(S^{d-1})}2^{-j}s.$$
\end{itemize}
\end{lemma}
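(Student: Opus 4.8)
The plan is to treat the two assertions separately: (i) by reducing $T_{\Omega,a}1$ to the truncation-free operators $T_{\Omega,m}$ of (\ref{equa2.tam}), and (ii) by a direct Lipschitz estimate for the dyadic pieces $T_{\Omega,a}^{j}1$.

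For (i), I would first establish, modulo additive constants (as is appropriate for a singular integral evaluated on an $L^{\infty}$ function), the identity
$$T_{\Omega,a}1=\sum_{m=1}^{d}T_{\Omega,m}(\partial_{m}a).$$
To see this, substitute $z=x-y$ and write $a(x)-a(x-z)=\int_{0}^{1}z\cdot\nabla a(x-uz)\,du=\sum_{m=1}^{d}z_{m}\int_{0}^{1}\partial_{m}a(x-uz)\,du$; the matter then reduces to the inner integral ${\rm p.\,v.}\int_{\mathbb{R}^{d}}\frac{\Omega(z)z_{m}}{|z|^{d+1}}\partial_{m}a(x-uz)\,dz$, and for each fixed $u>0$ the change of variables $w=uz$ together with the degree-zero homogeneity of $\Omega$ identifies this integral with $T_{\Omega,m}(\partial_{m}a)(x)$, independently of $u$, so the $u$-integration merely contributes a factor of one. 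The interchange of integrations and the passage to the principal value are justified by a standard truncation argument; it is the angular cancellation $\int_{S^{d-1}}\Omega(\theta)\theta_{m}\,d\theta=0$ furnished by (\ref{equa:1.1}) with $|\gamma|=1$ that makes the principal value converge. Next I would observe that $\Omega(\theta)\theta_{m}$ is integrable on $S^{d-1}$, has mean value zero by (\ref{equa:1.1}) with $\gamma=e_{m}$, and obeys $|\Omega(\theta)\theta_{m}|\le|\Omega(\theta)|$, hence belongs to $GS_{\beta}(S^{d-1})$ with the same constant; consequently $T_{\Omega,m}$ is bounded on $L^{2}(\mathbb{R}^{d})$ (as recalled after Lemma \ref{lem2.2}) and, more importantly, maps $L^{\infty}(\mathbb{R}^{d})$ into ${\rm BMO}(\mathbb{R}^{d})$. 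Since $\partial_{m}a\in L^{\infty}(\mathbb{R}^{d})$, part (i) then follows at once.

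For (ii), I would start from $T_{\Omega,a}^{j}1(x)=\int_{\mathbb{R}^{d}}\frac{\Omega(z)}{|z|^{d+1}}(a(x)-a(x-z))\varphi_{j}(|z|)\,dz$, an absolutely convergent integral since $\varphi_{j}$ is supported in $\{2^{j-1}\le|z|\le2^{j+1}\}$, so in particular $T_{\Omega,a}^{j}1$ is a bounded function. Comparing its values at two points $x,x'$ and using $|(a(x)-a(x-z))-(a(x')-a(x'-z))|\le2\|\nabla a\|_{L^{\infty}(\mathbb{R}^{d})}|x-x'|$ gives $|T_{\Omega,a}^{j}1(x)-T_{\Omega,a}^{j}1(x')|\lesssim\|\nabla a\|_{L^{\infty}(\mathbb{R}^{d})}|x-x'|\int_{\mathbb{R}^{d}}\frac{|\Omega(z)|}{|z|^{d+1}}|\varphi_{j}(|z|)|\,dz$; since the last integral equals $\|\Omega\|_{L^{1}(S^{d-1})}\int_{0}^{\infty}|\varphi_{j}(r)|r^{-2}\,dr\lesssim\|\Omega\|_{L^{1}(S^{d-1})}2^{-j}$, the function $T_{\Omega,a}^{j}1$ is Lipschitz with constant $\lesssim\|\Omega\|_{L^{1}(S^{d-1})}2^{-j}$. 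Finally, since $\psi_{s}$ has integral zero and is supported in $\{|w|\le s\}$,
$$Q_{s}T_{\Omega,a}^{j}1(x)=\int_{|x-w|\le s}\psi_{s}(x-w)\,(T_{\Omega,a}^{j}1(w)-T_{\Omega,a}^{j}1(x))\,dw,$$
and estimating the integrand by the Lipschitz bound yields $\|Q_{s}T_{\Omega,a}^{j}1\|_{L^{\infty}(\mathbb{R}^{d})}\lesssim\|\Omega\|_{L^{1}(S^{d-1})}2^{-j}s\,\|\psi\|_{L^{1}(\mathbb{R}^{d})}$, which is (ii) (the restriction $s\le2^{j}$ being simply the range in which this bound is of interest).

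The step I expect to be the main obstacle is the $L^{\infty}\to{\rm BMO}$ boundedness of the rough operator $T_{\Omega,m}$ invoked in (i). Because $GS_{\beta}(S^{d-1})$ functions need not lie in $H^{1}(S^{d-1})$, the kernel of $T_{\Omega,m}$ fails the pointwise H\"ormander regularity condition, so the textbook $L^{\infty}\to{\rm BMO}$ argument (splitting the input into its restrictions to $2Q$ and to $(2Q)^{c}$) does not apply to the far part. Instead one decomposes $T_{\Omega,m}$ into dyadic convolution pieces (convolution against $\Omega(z)z_{m}|z|^{-d-1}\varphi_{j}(|z|)$), controls each piece on $L^{2}(\mathbb{R}^{d})$ by a Fourier-side decay of the type $\lesssim\log^{-\beta}(|2^{j}\xi|+1)$ established exactly as in Lemma \ref{lem2.3} and \cite{gras}, and then sums these estimates via the Carleson-measure characterization of ${\rm BMO}$; the hypothesis $\beta>1$ is precisely what makes the relevant series converge. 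Part (ii) is, in effect, the same kind of $L^{2}$-side estimate for $T_{\Omega,a}$ itself, and it is what will drive the analogous summation in the $L^{2}$ argument of the next section.
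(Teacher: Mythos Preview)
Your proof of (ii) is correct and is precisely what the paper (via Hofmann's Lemma~2.4) has in mind: the Lipschitz bound on $T_{\Omega,a}^{j}1$ combined with the mean-zero of $\psi_{s}$.

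For (i), the identity $T_{\Omega,a}1=\sum_{m}T_{\Omega,m}(\partial_{m}a)$ is indeed the heart of the reduction and matches the paper's route through Hofmann's Lemma~2.3. Where you diverge slightly is in then invoking the full mapping $T_{\Omega,m}:L^{\infty}\to{\rm BMO}$; for a \emph{generic} $L^{\infty}$ input this genuinely needs kernel regularity that $GS_{\beta}$ does not supply, and your proposed Carleson-measure argument, while plausible, is real extra work. The paper's observation is that one never needs this general mapping: working directly with $T_{\Omega,a}1$, fix a cube $Q$ with $\ell(Q)\sim 2^{j_{0}}$. The far scales $\sum_{j>j_{0}}T_{\Omega,a}^{j}1$ have oscillation over $Q$ bounded by $\sum_{j>j_{0}}2^{-j}\ell(Q)\lesssim 1$, courtesy of exactly the Lipschitz estimate you proved in (ii). The near scales $\sum_{j\le j_{0}}T_{\Omega,a}^{j}1$, restricted to $x\in Q$, only see $\partial_{m}a$ on a fixed dilate $CQ$ (via your identity and the support of $\varphi_{j}$), so their $L^{2}(Q)$ norm is controlled by the $L^{2}$-boundedness of $T_{\Omega,m}$ (and its maximal truncation, also available from \cite{gras}) applied to $(\partial_{m}a)\chi_{CQ}$. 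This is why the paper singles out only the $L^{2}$-boundedness of $T_{\Omega,m}$ as the needed input. In short: your approach is correct, but you can avoid the obstacle you flagged by recycling (ii) for the far part rather than going through $T_{\Omega,m}:L^{\infty}\to{\rm BMO}$.
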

Conclusion (ii) is just Lemma 2.4 in \cite{hof1}, while (i) of Lemma \ref{lem2.4} can be proved by mimicking the proof of Lemma 2.3 in \cite{hof1}, since for all $1\leq m\leq d$, $T_{\Omega, m}$ defined by (\ref{equa2.tam}) is bounded on $L^2(\mathbb{R}^d)$ when $\Omega\in GS_{\beta}(S^{d-1})$ for $\beta>1$. We omit the details for brevity.

\medskip

{\it Proof of Theorem \ref{thm1.1}: $L^2(\mathbb{R}^d)$ boundedness}.
By (\ref{equation2.reproducing}), it suffices to prove that for $f,\,g\in C^{\infty}_0(\mathbb{R}^d)$,
\begin{eqnarray}\label{eq2.x1} \Big|\int^{\infty}_0\int_{0}^t\int_{\mathbb{R}^d}Q_s^4{T}_{\Omega,\,a}Q_t^4f(x)g(x)dx
\frac{ds}{s}\frac{dt}{t}\Big|\lesssim \|f\|_{L^2(\mathbb{R}^d)}\|g\|_{L^{2}(\mathbb{R}^d)},
\end{eqnarray}
and
\begin{eqnarray}\label{eq2.x2} \Big|\int^{\infty}_0\int_{t}^{\infty}\int_{\mathbb{R}^d}Q_s^4{T}_{\Omega,\,a}Q_t^4f(x)g(x)dx
\frac{ds}{s}\frac{dt}{t}\Big|\lesssim \|f\|_{L^2(\mathbb{R}^d)}\|g\|_{L^{2}(\mathbb{R}^d)}.
\end{eqnarray}
Observe that (\ref{eq2.x2}) can be deduced from (\ref{eq2.x1}) and a standard duality argument. Thus, we only need to prove (\ref{eq2.x1}).

We now prove (\ref{eq2.x1}). Without loss of generality, we assume that $\|\nabla a\|_{L^{\infty}(\mathbb{R}^d)}=1$. Write
\begin{eqnarray*}
&&\int^{\infty}_0\int_{0}^t\int_{\mathbb{R}^d}Q_s^4{T}_{\Omega,\,a}
Q_t^4f(x)g(x)dx\frac{ds}{s}\frac{dt}{t}\\
&&\quad=\sum_{j\in\mathbb{Z}}\int^{2^j}_0\int_{0}^t\int_{\mathbb{R}^d}Q_s{T}_{\Omega,\,a}^{j}Q_t^4f(x)Q_s^3g(x)dx
\frac{ds}{s}\frac{dt}{t}\\
&&\qquad+\sum_{j\in\mathbb{Z}}\int^{\infty}_{2^j}\int_{0}^{(2^jt^{\alpha-1})^{\frac{1}{\alpha}}}
\int_{\mathbb{R}^d}Q_s{T}_{\Omega,\,a}^{j}Q_t^4f(x)Q_s^3g(x)dx\frac{ds}{s}\frac{dt}{t}\\
&&\qquad+\sum_{j\in\mathbb{Z}}\int^{\infty}_{2^j}\int_{(2^jt^{\alpha-1})^{\frac{1}{\alpha}}}^t
\int_{\mathbb{R}^d}Q^4_s{T}_{\Omega,\,a}^{j}Q_t^4f(x)g(x)dx\frac{ds}{s}\frac{dt}{t}:={\rm D}_1+{\rm D}_2+{\rm D}_3,
\end{eqnarray*}		
where $\alpha\in (\frac{d+1}{d+2},\,1)$ is a constant.

We first consider term ${\rm D}_2$. For each fixed $j\in\mathbb{Z}$, let $\{I_{j,l}\}_{l}$ be a sequence of cubes having disjoint interiors and side length $2^j$, such that $
\mathbb{R}^d=\cup_{l}I_{j,l}.$
For each fixed $j,l$, let $\omega_{j,l}\in C^{\infty}_0(\mathbb{R}^d)$ such that ${\rm supp}\,\omega_{j,l}\subset 48dI_{j,l}$, $0\leq \omega_{j,l}\leq 1$ and $\omega_{j,l}(x)\equiv 1$ when $x\in 32dI_{j,l}$. Let $I_{j,l}^*=64dI_{j,l}$ and $x_{j,l}$ be the center of $I_{j,l}$. For each $l$, set $a_{j,l}(y)=(a(y)-a(x_{j,l}))\omega_{j,l}(y)$, and $h_{s,j,l}(y)=Q_s^2g(y)\chi_{I_{j,l}}(y)$.
It is obvious that for all $l$, $$\|a_{j,l}\|_{L^{\infty}(\mathbb{R}^d)}\lesssim 2^j,\,\,\|\nabla a_{j,l}\|_{L^{\infty}(\mathbb{R}^d)}\lesssim 1,$$and for $s\in (0,\,2^j]$ and $x\in {\rm supp}\,Q_sh_{s,j,l}$,
$$T_{\Omega,\,a}^{j}h(x)=a_{j,l}(x)W_{\Omega,j}h(x)-W_{\Omega,j}(a_{j,l}h)(x).
$$
For each fixed $j$ and $l$, let
$${\rm D}_{j,l,1}(s,t)=-\int_{\mathbb{R}^d}[a_{j,l},Q_s]W_{\Omega,j}Q_t^4f(x)Q_sh_{s,j,l}(x)dx,$$
$${\rm D}_{j,l,2}(s,t)=\int_{\mathbb{R}^d}a_{j,l}(x)Q_sW_{\Omega,j}Q_t^4f(x)Q_sh_{s,j,l}(x)dx,$$
$${\rm D}_{j,l,3}(s,t)=\int_{\mathbb{R}^d}Q_sW_{\Omega,\,j}[a_{j,l},Q_s]Q_t^4f(x)h_{s,j,l}(x)dx,$$
and
$${\rm D}_{j,l,4}(s,\,t)=-\int_{\mathbb{R}^d}Q_sW_{\Omega,\,j}(a_{j,l}Q_sQ_t^4f)(x)h_{s,j,l}(x)dx,
$$
where and in the following, for a locally integrable function $b$ and an operator $U$, $[b,\,U]$ denotes the commutator of $U$ with symbol $b$, namely,
\begin{eqnarray}\label{equation2.commutatordef}
[b,\,U]h(x)=b(x)Uh(x)-U(bh)(x).
\end{eqnarray}
Observe that both of $Q_s$ and $W_{\Omega,j}$ are convolution operators and $Q_sW_{\Omega,j}=W_{\Omega,j}Q_s$.
For $j\in\mathbb{Z}$ and $s\in (0,\,2^j]$, we have that
\begin{eqnarray}\label{equation2.decomp}
\int_{\mathbb{R}^d}Q_s^4T_{\Omega,\,a}^{j}Q_t^4f(x)g(x)dx&=&\sum_l\int_{\mathbb{R}^d}Q_sT_{\Omega,\,a}^{j}
Q_t^4f(x)Q_sh_{s,j,l}(x)dx\\
&=&\sum_{n=1}^4\sum_{l}{\rm D}_{j,l,n}(s,t).\nonumber
\end{eqnarray}
It now follows from H\"older's inequality that\begin{eqnarray*}
&&\Big|\sum_{j}\sum_l\int_{2^j}^{\infty}\int_{0}^{(2^jt^{\alpha-1})^{1/\alpha}}{\rm D}_{j,l,1}(s,t)\frac{ds}{s}\frac{dt}{t}
\Big|\\
&&\quad\le \Big\|\Big(\sum_j\sum_l\int_{2^j}^{\infty}\int_{0}^{(2^jt^{\alpha-1})^{1/\alpha}}|\chi_{I_{j,l}^*}Q_t^4f|^2
2^{-j}s
\frac{ds}{s}\frac{dt}{t}\Big)^{\frac{1}{2}}\Big\|_{L^2(\mathbb{R}^d)}\\
&&\quad\times\Big\|\Big(\sum_j\sum_l\int_{2^j}^{\infty}\int_{0}^{(2^jt^{\alpha-1})^{1/\alpha}}
|W_{\Omega,j}[a_{j,l},Q_s]Q_sh_{s,j,l}
|^2\frac{1}{2^{-j}s}
\frac{ds}{s}\frac{dt}{t}\Big)^{\frac{1}{2}}\Big\|_{L^{2}(\mathbb{R}^d)}.
\end{eqnarray*}
Invoking the fact that $\sum_{l}\chi_{I_{j,l}^*}\lesssim 1$, we deduce that
\begin{eqnarray*}&& \Big\|\Big(\sum_j\sum_l\int_{2^j}^{\infty}\int_{0}^{(2^jt^{\alpha-1})^{1/\alpha}}|\chi_{I_{j,l}^*}Q_t^4f|^2
2^{-j}s
\frac{ds}{s}\frac{dt}{t}\Big)^{\frac{1}{2}}\Big\|_{L^2(\mathbb{R}^d)}\\
&&\quad\lesssim \Big\|\Big(
\int^{\infty}_{0}|Q_t^4f|^2\int_{0}^{t}\sum_{j:\, 2^j\geq s^{\alpha}t^{1-\alpha}}2^{-j}s
\frac{ds}{s}
\frac{dt}{t}\Big)^{1/2}\Big\|_{L^2(\mathbb{R}^d)}\lesssim\|f\|_{L^2(\mathbb{R}^d)}.
\end{eqnarray*}
Let $M_{\Omega}$ be the operator defined by
$$M_{\Omega}f(x)=\sup_{r>0}r^{-d}\int_{|x-y|<r}|\Omega(x-y)||f(y)|dy.$$
The method of rotation of Calder\'on and Zygmund states that
\begin{eqnarray}\label{equation2.rotation}\|M_{\Omega}f\|_{L^p(\mathbb{R}^d)}\lesssim \|\Omega\|_{L^1(S^{d-1})}\|f\|_{L^p(\mathbb{R}^d)},\,\,p\in (1,\,\infty).\end{eqnarray}
Let $M$ be the Hardy-Littlewood maximal operator. Observe that when $s\in (0,\,2^j]$,
$$\big|[a_{j,l},\,Q_s]h(x)\big|\le \int_{\mathbb{R}^d}|\psi_s(x-y)||a_{j,l}(x)-a_{j,l}(y)||h(y)|dy\lesssim sMh(x).
$$
This, together with  (\ref{equation2.rotation}), yields
\begin{eqnarray*}
&&\Big\|\Big(\sum_j\sum_l\int_{2^j}^{\infty}\int_{0}^{(2^jt^{\alpha-1})^{1/\alpha}}|W_{\Omega,j}
[a_{j,l},Q_s]Q_sh_{s,j,l}
|^2(2^{-j}s)^{-1}
\frac{ds}{s}\frac{dt}{t}\Big)^{\frac{1}{2}}\Big\|^2_{L^{2}(\mathbb{R}^d)}\\
&&\quad\lesssim \sum_j\sum_l\int_{2^j}^{\infty}\int_{0}^{(2^jt^{\alpha-1})^{1/\alpha}}
\|M_{\Omega}MQ_sh_{s,j,l}
\|^2_{L^2(\mathbb{R}^d)}2^{-j}s
\frac{ds}{s}\frac{dt}{t}\\
&&\quad\lesssim \sum_j\sum_l\int_{2^j}^{\infty}\int_{0}^{(2^jt^{\alpha-1})^{1/\alpha}}
\|h_{s,j,l}
\|^2_{L^2(\mathbb{R}^d)}2^{-j}s
\frac{ds}{s}\frac{dt}{t}\lesssim\|g\|^2_{L^{2}(\mathbb{R}^d)},
\end{eqnarray*}
where  the last inequality follows from the fact that
\begin{eqnarray*}
\int_{s}^{\infty}\sum_{j:2^j\geq s^{\alpha}t^{1-\alpha}}2^{-j}s\frac{dt}{t}\lesssim 1.
\end{eqnarray*}
Therefore,
\begin{eqnarray}\label{equ2.13}
&&\Big|\sum_{j}\sum_l\int_{2^j}^{\infty}\int_{0}^{(2^jt^{\alpha-1})^{\frac{1}{\alpha}}}{\rm D}_{j,l,1}(s,t)\frac{ds}{s}\frac{dt}{t}
\Big|\lesssim \|f\|_{L^2(\mathbb{R}^d)}\|g\|_{L^{2}(\mathbb{R}^d)}.
\end{eqnarray}

Similar to the estimate (\ref{equ2.13}), we have that
\begin{eqnarray}\label{equ2.21}
&&\Big|\sum_{j}\sum_l\int_{2^j}^{\infty}\int_{0}^{(2^jt^{\alpha-1})^{\frac{1}{\alpha}}}{\rm D}_{j,l,3}(s,t)\frac{ds}{s}\frac{dt}{t}
\Big|\lesssim \|f\|_{L^2(\mathbb{R}^d)}\|g\|_{L^{2}(\mathbb{R}^d)}.\end{eqnarray}

To estimate the term  $\int_{2^j}^{\infty}\int_{0}^{(2^jt^{\alpha-1})^{1/\alpha}}{\rm D}_{j,l,2}(s,t)\frac{ds}{s}\frac{dt}{t}$, we write
\begin{eqnarray*}
{\rm D}_{j,l,2}(s,t)&=&\int_{\mathbb{R}^d}Q_sW_{\Omega,j}Q_t^4f(x)[a_{j,l},Q_s]h_{s,j,l}(x)dx\\
&+&\int_{\mathbb{R}^d}Q_sW_{\Omega,j}Q_t^4f(x)Q_s(a_{j,l}h_{s,j,l})(x)dx={\rm D}_{j,l,2}^1(s,t)+{\rm D}_{j,l,2}^2(s,t).
\end{eqnarray*}
Repeating the estimate for ${\rm D}_{j,l,1}$, we have that
\begin{eqnarray}\label{equ2.14}
&&\Big|\sum_{j}\sum_l\int_{2^j}^{\infty}\int_{0}^{(2^jt^{\alpha-1})^{\frac{1}{\alpha}}}{\rm D}_{j,l,2}^1(s,t)\frac{ds}{s}\frac{dt}{t}
\Big|\lesssim \|f\|_{L^2(\mathbb{R}^d)}\|g\|_{L^{2}(\mathbb{R}^d)}.
\end{eqnarray}
Write
\begin{eqnarray*}
&&\Big|\sum_{j}\sum_l\int_{2^j}^{\infty}\int_{0}^{(2^jt^{\alpha-1})^{\frac{1}{\alpha}}}{\rm D}_{j,l,2}^2(s,t)\frac{ds}{s}\frac{dt}{t}
\Big|\\
&&\quad\le
\Big\|\Big(\sum_{j}\int_{2^j}^{\infty}\int_{0}^{(2^jt^{\alpha-1})^{\frac{1}{\alpha}}}
|Q_s^2(2^jW_{\Omega,j})Q_t^3f|^2\log^{\sigma}(2^j/s+1)
\frac{ds}{s}\frac{dt}{t}\Big)^{\frac{1}{2}}
\Big\|_{L^2(\mathbb{R}^d)}\\
&&\quad\times
\Big\|\Big(\sum_{j}\int_{2^j}^{\infty}\int_{0}^{(2^jt^{\alpha-1})^{\frac{1}{\alpha}}}
\big|2^{-j}Q_t\big(\sum_la_{j,l}h_{s,j,l}\big)\big|^2
\log^{-\sigma}(\frac{2^j}{s}+1)\frac{ds}{s}\frac{dt}{t}\Big)^{\frac{1}{2}}
\Big\|_{L^{2}(\mathbb{R}^d)}\\
&&\quad:={\rm I}_1{\rm I}_2,
\end{eqnarray*}
where $\sigma>1$ is a constant such that $2\beta-\sigma>1$. Invoking the estimate (\ref{equa2.little}), we obtain that
\begin{eqnarray*}
\quad\quad{\rm I}_2&\lesssim& \Big(\sum_{j}\int_{0}^{2^j}
\big\|2^{-j}\sum_la_{j,l}h_{s,j,l}\big\|^2_{L^2(\mathbb{R}^d)}
\log^{-\sigma}(2^j/s+1)\frac{ds}{s}\Big)^{\frac{1}{2}}\\
&\lesssim&\Big(\sum_{j}\int_{0}^{2^j}
\big\|\sum_l|h_{s,j,l}|\big\|_{L^2(\mathbb{R}^d)}^2
\log^{-\sigma}(2^j/s+1)\frac{ds}{s}\Big)^{\frac{1}{2}}\nonumber\\
&=&\Big(\int_{0}^{\infty}
\|Q_s^2g\|^2_{L^2(\mathbb{R}^d)}
\sum_{j:2^j\geq s}\log^{-\sigma}(2^j/s+1)\frac{ds}{s}\Big)^{\frac{1}{2}}\lesssim \|g\|_{L^{2}(\mathbb{R}^d)}.\nonumber
\end{eqnarray*}
Note that $Q_s^2(2^jW_{\Omega,j})=Q_s(2^jW_{\Omega,j})Q_s$.  It follows  from Lemma \ref{lem2.3} and (\ref{equa2.little}) that
\begin{eqnarray*}
{\rm I}_1&=&\Big(\sum_{j}\int_{2^j}^{\infty}\int_{0}^{(2^jt^{\alpha-1})^{\frac{1}{\alpha}}}
\|Q_s^2(2^jW_{\Omega,j})Q_t^3f\|^2_{L^2(\mathbb{R}^d)}\log^{\sigma}(\frac{2^j}{s}+1)
\frac{ds}{s}\frac{dt}{t}\Big)^{\frac{1}{2}}\\
&\lesssim&\Big(\sum_{j}\int_{2^j}^{\infty}\int_{0}^{(2^jt^{\alpha-1})^{\frac{1}{\alpha}}}
\|Q_sQ_t^3f\|^2_{L^2(\mathbb{R}^d)}\log^{-2\beta+\sigma}(2^j/s+1)
\frac{ds}{s}\frac{dt}{t}\Big)^{\frac{1}{2}}\nonumber\\
&\lesssim&\Big\|\Big(\int_{0}^{\infty}\int_{0}^{\infty}
|Q_sQ_t^3f|^2
\frac{ds}{s}\frac{dt}{t}\Big)^{\frac{1}{2}}
\Big\|^2_{L^2(\mathbb{R}^d)}\lesssim \|f\|_{L^2(\mathbb{R}^d)}.\nonumber
\end{eqnarray*}
The estimates for ${\rm I}_1$ and ${\rm I}_2$ show that
\begin{eqnarray*}
&&\Big|\sum_{j}\sum_l\int_{2^j}^{\infty}\int_{0}^{(2^jt^{\alpha-1})^{1/\alpha}}{\rm D}_{j,l,2}^2(s,t)\frac{ds}{s}\frac{dt}{t}
\Big|\lesssim \|f\|_{L^2(\mathbb{R}^d)}\|g\|_{L^{2}(\mathbb{R}^d)}.
\end{eqnarray*}
This, together with (\ref{equ2.14}), gives us that
\begin{eqnarray}\label{equ2.20}
&&\Big|\sum_{j}\sum_l\int_{2^j}^{\infty}\int_{0}^{(2^jt^{\alpha-1})^{1/\alpha}}{\rm D}_{j,l,2}(s,t)\frac{ds}{s}\frac{dt}{t}
\Big|\lesssim \|f\|_{L^2(\mathbb{R}^d)}\|g\|_{L^{2}(\mathbb{R}^d)}.
\end{eqnarray}

We now estimate term corresponding to $\int_{2^j}^{\infty}\int_{0}^{(2^jt^{\alpha-1})^{1/\alpha}}{\rm D}_{j,l,4}(s,t)\frac{ds}{s}\frac{dt}{t}$. Write
\begin{eqnarray*}
&&\Big|\sum_{j}\sum_l\int_{2^j}^{\infty}\int_{0}^{(2^jt^{\alpha-1})^{1/\alpha}}{\rm D}_{j,l,4}(s,t)\frac{ds}{s}\frac{dt}{t}\Big |\\
&&\quad\leq
\Big\|\Big(\sum_j\int_{2^j}^{\infty}\int_{0}^{(2^jt^{\alpha-1})^{1/\alpha}}|Q_sQ_t^3f|^2\log^{-\sigma}(2^j/s+1)\frac{ds}{s}\frac{dt}{t}\Big)^{\frac{1}{2}}
\Big\|_{L^2(\mathbb{R}^d)}\\
&&\quad\times
\Big\|\Big(\sum_j\int_{2^j}^{\infty}\int_{0}^{(2^jt^{\alpha-1})^{1/\alpha}}\Big|Q_t\Big(\sum_la_{j,l}
W_{\Omega,j}Q_sh_{s,j,l}\Big)\Big|^2\log^{\sigma} (\frac{2^j}{s}+1)\frac{ds}{s}\frac{dt}{t}\Big)^{\frac{1}{2}}
\Big\|_{L^{2}(\mathbb{R}^d)}\\
&&\quad:={\rm I}_3{\rm I_4}.
\end{eqnarray*}
Obviously,
$${\rm I}_3\lesssim \Big\|\Big(\int_{0}^{\infty}\int_{0}^{\infty}|Q_sQ_t^3f|^2\frac{ds}{s}\frac{dt}{t}\Big)^{\frac{1}{2}}
\Big\|_{L^2(\mathbb{R}^d)}\lesssim\|f\|_{L^2(\mathbb{R}^d)}.
$$
On the other hand, it follows from Littlewood-Paley theory and Lemma \ref{lem2.3} that
\begin{eqnarray*}
{\rm I}_4&\lesssim&\Big(\sum_j\int_{0}^{2^j}\Big\|\sum_la_{j,l}
W_{\Omega,\,j}Q_sh_{s,j,l}\Big\|^2_{L^2(\mathbb{R}^d)}\log^{\sigma} (\frac{2^j}{s}+1)\frac{ds}{s}\Big)^{\frac{1}{2}}\\
&\lesssim &\Big(\sum_j\int_0^{2^j}2^{2j}\sum_l\|
W_{\Omega,j}Q_sh_{s,j,l}\|^2_{L^2(\mathbb{R}^d)}\log^{\sigma} (\frac{2^j}{s}+1)\frac{ds}{s}\Big)^{\frac{1}{2}}\\
&\lesssim &\Big(\sum_j\int_0^{2^j}\sum_l\|
h_{s,j,l}\|^2_{L^2(\mathbb{R}^d)}\log^{-2\beta+\sigma} (\frac{2^j}{s}+1)\frac{ds}{s}\Big)^{\frac{1}{2}}\lesssim \|g\|_{L^2(\mathbb{R}^d)},
\end{eqnarray*}
since $\|a_{j,l}\|_{L^{\infty}(\mathbb{R}^d)}\lesssim 2^j$, and the supports of functions $\{a_{j,l}
W_{\Omega,\,j}Q_sh_{s,j,l}\}$ have bounded overlaps.
The estimate for ${\rm I}_4$, together with the estimate for ${\rm I}_3$, gives us that
\begin{eqnarray}\label{equation2.21}\Big|\sum_{j}\sum_l\int_{2^j}^{\infty}\int_{0}^{(2^jt^{\alpha-1})^{1/\alpha}}{\rm D}_{j,l,4}(s,t)\frac{ds}{s}\frac{dt}{t}\Big |\lesssim \|f\|_{L^2(\mathbb{R}^d)}\|g\|_{L^{2}(\mathbb{R}^d)}.\end{eqnarray}
Combining inequalities (\ref{equ2.13}), (\ref{equ2.21}), (\ref{equ2.20}) and (\ref{equation2.21}) leads to that
$$|{\rm D}_2|\lesssim \|f\|_{L^2(\mathbb{R}^d)}\|g\|_{L^{2}(\mathbb{R}^d)}.$$

The estimate for ${\rm D}_1$ is fairly similar to the estimate ${\rm D}_2$. For example, since
$$\int_{0}^{t}\sum_{j:\, 2^j\geq t}2^{-j}s\frac{ds}{s}\lesssim 1, \,\,
\int_{s}^{\infty}\sum_{j:2^j\geq t}2^{-j}s\frac{dt}{t}\lesssim 1,
$$
we have that
\begin{eqnarray*}
&&\Big|\sum_{j}\sum_l\int_0^{2^j}\int_{0}^{t}{\rm D}_{j,l,1}(s,t)\frac{ds}{s}\frac{dt}{t}
\Big|\\
&&\quad\le \Big\|\Big(\sum_j\sum_l\int_0^{2^j}\int_{0}^{t}|\chi_{I_{j,l}^*}Q_t^4f|^2
2^{-j}s
\frac{ds}{s}\frac{dt}{t}\Big)^{\frac{1}{2}}\Big\|_{L^2(\mathbb{R}^d)}\\
&&\quad\times\Big\|\Big(\sum_j\sum_l\int_0^{2^j}\int_{0}^{\infty}
|W_{\Omega,j}[a_{j,l},Q_s]Q_sh_{s,j,l}
|^2(2^{-j}s)^{-1}
\frac{ds}{s}\frac{dt}{t}\Big)^{\frac{1}{2}}\Big\|_{L^{2}(\mathbb{R}^d)}\\
&&\quad\lesssim \|f\|_{L^2(\mathbb{R}^d)}\|g\|_{L^{2}(\mathbb{R}^d)}.
\end{eqnarray*}
The estimates for terms $\sum_{j}\sum_l\int_0^{2^j}\int_{0}^{t}{\rm D}_{j,l,i}(s,t)\frac{ds}{s}\frac{dt}{t}$ $(i=2,3,4)$ are parallel to the estimates for
$\sum_{j}\sum_l\int_{2^j}^{\infty}\int_{0}^{(2^jt^{\alpha-1})^{1/\alpha}}{\rm D}_{j,l,i}(s,t)\frac{ds}{s}\frac{dt}{t}$. Altogether, we have that
$$|{\rm D}_1|\lesssim \|f\|_{L^2(\mathbb{R}^d)}\|g\|_{L^{2}(\mathbb{R}^d)}.$$

It remains to consider ${\rm D}_3$. This was essentially proved in \cite[pp.\,1281-1283]{hof1}. For the sake of self-contained, we present the details here. Set
$$ h(x,y)=\int\int\psi_s(x-z)\sum_{j:2^j\leq s^\alpha t^{1-\alpha}}K_j(z,u)[\psi_t(u-y)-\psi_t(x-y)]dudz.$$
Let $H$ be the operator with integral kernel $h$.
It then follows that
\begin{eqnarray*}
|{\rm D}_3|&\lesssim &\Big|\int_0^\infty \int_0^t\int_{\mathbb{R}^d} HQ_t^3f(x)Q_s^3g(x)dx\frac{ds}{s}\frac{dt}{t}\Big|\\
&+&\Big|\sum_{j\in\mathbb{Z}}\int^{\infty}_{2^j}\int_{(2^jt^{\alpha-1})^{\frac{1}{\alpha}}}^t
\int_{\mathbb{R}^d}(Q_s{T}_{\Omega,\,a}^{j}1)(x)Q_t^4f(x)Q_s^3g(x)dx\frac{ds}{s}\frac{dt}{t}\Big|\nonumber\\
&=&|{\rm D}_{31}|+|{\rm D}_{32}|.\nonumber
\end{eqnarray*}
As in \cite[p.\,1282]{hof1}, we obtain by Lemma \ref{lem2.2} and the mean value theorem that
$$|h(x,y)|\lesssim\Big(\frac{s}{t}\Big)^{\varrho} t^{-d}\chi_{\{(x,y):|x-y|\leq Ct\}}(x,y),$$
where $\varrho =(d+2)\alpha-d-1\in (0,\,1)$. Then we  have
$$|HQ_t^3f(x)|\lesssim (\frac{s}{t})^{\varrho} M(Q_t^3f)(x),$$
and
\begin{eqnarray*}
|{\rm D}_{31}|&\lesssim &\int_0^\infty \int_0^t\int_{\mathbb{R}^d}|M(Q_t^3f)(x)||Q_s^3g(x)|dx\big(\frac{s}{t}\big)^{\varrho}\frac{ds}{s}\frac{dt}{t}\\
&\lesssim&\Big\|\Big(\int_0^\infty \int_0^t|M(Q_t^3f)|^{2}(\frac{s}{t})^{\varrho} \frac{ds}{s}\frac{dt}{t}\Big)^{\frac{1}{2}}\Big\|_{L^2(\mathbb{R}^d)}\nonumber\\
&&\times \Big\|\Big(\int_0^\infty\int_s^\infty|Q_s^3g|^{2}(\frac{s}{t})^{\varrho}\frac{dt}{t}\frac{ds}{s}
\Big)^{\frac{1}{2}}\Big\|_{L^{2}(\mathbb{R}^d)}\nonumber\\
&\lesssim&\Big\|\Big(\int_0^\infty|M(Q_t^3f)|^2\frac{dt}{t}\Big)^{\frac{1}{2}}\Big\|_{L^2(\mathbb{R}^d)}
\Big\|\Big(\int_0^\infty|Q_s^3g|^{2}
\frac{ds}{s}
\Big)^{\frac{1}{2}}\Big\|_{L^{2}(\mathbb{R}^d)}\nonumber\\
&\lesssim&\|f\|_{L^2(\mathbb{R}^d)}\|g\|_{L^2(\mathbb{R}^d)}.\nonumber
\end{eqnarray*}

As for ${\rm D}_{32}$, we split it into three parts as follows:
\begin{eqnarray*}
{\rm D}_{32}=\sum_{j\in\mathbb{Z}}\int^{\infty}_{0}\int_{0}^t-
\sum_{j\in\mathbb{Z}}\int^{2^j}_{0}\int_{0}^t-\sum_{j\in\mathbb{Z}}
\int_{2^j}^\infty\int_{0}^{(2^jt^{\alpha-1})^{\frac{1}{\alpha}}}={\rm D}_{321}-{\rm D}_{322}-{\rm D}_{323}.
\end{eqnarray*}
Let
$$\zeta(x)=\int^{\infty}_1\psi_t*\psi_t*\psi_t*\psi_t(x)\frac{dt}{t},\,\,P_s=\int_s^\infty Q_t^4\frac{dt}{t}.$$
Han and Sawyer \cite{han} proved that $\zeta$ is a radial function which is supported on a ball having radius $C$ and has mean value zero. Observe that $P_sf(x)=\zeta_s*f(x)$ with $\zeta_s(x)=s^{-d}\zeta(s^{-1}x)$.
The Littlewood-Paley theory tells us that
$$\Big\|\Big(\int^{\infty}_0|P_sf|^2\frac{ds}{s}\Big)^{\frac{1}{2}}\Big\|_{L^2(\mathbb{R}^d)}\lesssim \|f\|_{L^2(\mathbb{R}^d)}.
$$
(i) of Lemma \ref{lem2.4} states that $T_{\Omega,a}1\in {\rm  BMO}(\mathbb{R}^d)$. Recall that ${\rm supp}\,\psi\subset B(0,\,1)$ and $\psi$ has integral zero. Thus  for $x\in \mathbb{R}^d$,
\begin{eqnarray*}
|Q_s(T_{\Omega,a}1)(x)|\le s^{-d}\int_{|x-y|\leq s}|\psi(s^{-1}(x-y))||T_{\Omega,a}1(y)-\langle T_{\Omega,a}1\rangle_{B(x,s)}|dy\lesssim 1,
\end{eqnarray*}
where $\langle T_{\Omega,a}1\rangle_{B(x,s)}$ denotes the mean value of $T_{\Omega,a}1$ on the ball centered at $x$ and having radius $s$.
Therefore,
\begin{eqnarray*}
|{\rm D}_{321}|&=&\Big|\int_{\mathbb{R}^d}\int_0^\infty Q_sT_{\Omega,a}1(x)P_sf(x)Q_s^3g(x)\frac{ds}{s}dx\Big|\\
&\lesssim &\Big\|\Big(\int^{\infty}_0|P_sf|^2\frac{ds}{s}\Big)^{\frac{1}{2}}\Big\|_{L^2(\mathbb{R}^d)}
\Big\|\Big(\int^{\infty}_0|Q_s^3g|^{2}\frac{ds}{s}\Big)^{\frac{1}{2}}\Big\|_{L^{2}(\mathbb{R}^d)}\\
&\lesssim&\|f\|_{L^2(\mathbb{R}^d)}\|g\|_{L^2(\mathbb{R}^d)}.
\end{eqnarray*}
From (ii) of Lemma \ref{lem2.4} and H\"older's inequality, we obtain that
\begin{eqnarray*}
|{\rm D}_{322}|&\lesssim & \Big\|\Big(\sum_j\int_{0}^{2^j}\int_0^t 2^{-j}s|Q_t^4f|^2\frac{ds}{s}\frac{dt}{t}\Big)^{1/2}\Big\|_{L^2(\mathbb{R}^d)}\\
&&\times\Big\|\Big(\sum_j\int_0^{2^j}\int_0^t 2^{-j}s|Q_s^3g|^{2}\frac{ds}{s}\frac{dt}{t}\Big)^{1/2}\Big\|_{L^2(\mathbb{R}^d)}\\
&\lesssim &\|f\|_{L^2(\mathbb{R}^d)}\|g\|_{L^2(\mathbb{R}^d)}.
\end{eqnarray*}
The same result holds true for ${\rm D}_{323}$. Combining the estimates for terms ${\rm D}_{321}$, ${\rm D}_{322}$ and ${\rm D}_{323}$ give us that
$$|{\rm D}_3|\lesssim \|f\|_{L^2(\mathbb{R}^d)}\|g\|_{L^2(\mathbb{R}^d)}.$$
This leads to (\ref{eq2.x1}) and then establishes  the $L^2(\mathbb{R}^d)$ boundedness of $T_{\Omega,a}$.\qed

\section{Proof of Theorem \ref{thm1.1}: $L^p$ boundedness}

We begin with some lemmas.

\begin{lemma}\label{lem0.1}Let $\varpi\in C^{\infty}_0(\mathbb{R}^d)$  be a radial function such that ${\rm supp}\, \varpi\subset\{1/4\leq |\xi|\leq 4\}$  and
$$\sum_{l\in\mathbb{Z}}\varpi^3(2^{-l}\xi)=1,\,\,\,|\xi|>0,$$ and $S_l$ be the multiplier operator defined by
$$\widehat{S_lf}(\xi)=\varpi(2^{-l}\xi)\widehat{f}(\xi).$$
Let $k\in\mathbb{Z}_+$, $a$ be a function on $\mathbb{R}^d$ such that $\nabla a\in L^{\infty}(\mathbb{R}^d)$. Then
\begin{eqnarray}\label{equation01}
\Big\|\Big(\sum_{l\in\mathbb{Z}}\big|2^{kl}[a,\,S_l]^kf\big|^2\Big)^{\frac{1}{2}}\Big\|_{L^2(\mathbb{R}^d)}\lesssim \|f\|_{L^2(\mathbb{R}^d)},
\end{eqnarray}
and
\begin{eqnarray}\label{equation02}
\Big\|\sum_{l\in\mathbb{Z}}2^{kl}[a,\,S_l]^kf_l\Big\|_{L^2(\mathbb{R}^d)}\lesssim \Big\|\Big(\sum_{l}|f_l|^2\Big)^{1/2}\Big\|_{L^2(\mathbb{R}^d)},
\end{eqnarray}
where and in the following, for a locally integrable function $a$ and an operator $U$, $[a,\,U]^0f=Uf$, while for $k\in\mathbb{N}$ with $k\geq 1$, $[a,\,U]^k$ denotes the commutator of $[a,\,U]^{k-1}$ and $a$, defined as  (\ref{equation2.commutatordef}).
\end{lemma}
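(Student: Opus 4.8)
The two inequalities are dual to each other in the following sense: once \eqref{equation01} is established for all $a$ with $\nabla a\in L^\infty$, inequality \eqref{equation02} follows by a standard duality argument, writing $\big\langle \sum_l 2^{kl}[a,S_l]^kf_l,\,g\big\rangle=\sum_l\big\langle f_l,\,2^{kl}([a,S_l]^k)^*g\big\rangle$, using Cauchy--Schwarz in $\ell^2$, and observing that $([a,S_l]^k)^*$ has the same structural form as $[a,S_l]^k$ (each $S_l$ is self-adjoint since $\varpi$ is real and radial, and multiplication by $a$ is self-adjoint), so the adjoint square function is again controlled by \eqref{equation01}. Therefore the plan is to concentrate on \eqref{equation01}, and to proceed by induction on $k$.

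\textbf{Base case and inductive step.} For $k=0$ the left side is $\big\|(\sum_l|S_lf|^2)^{1/2}\big\|_{L^2}$, which is $\lesssim\|f\|_{L^2}$ by the Littlewood--Paley inequality (Plancherel plus the bounded-overlap property of $\{\varpi(2^{-l}\cdot)\}$). For the inductive step, the key algebraic identity is the commutator ``Leibniz'' expansion: writing $[a,S_l]^kf = [a,S_l]\big([a,S_l]^{k-1}f\big)$ is not quite what we want; instead I would use that $2^{kl}[a,S_l]^k$ applied to $f$ can be rewritten, via the Calder\'on-type trick, so that one power of $a$ produces a gradient. Concretely, since $\nabla a\in L^\infty$, the kernel of $2^l[a,S_l]$ is $2^l(a(x)-a(y))2^{ld}\check\varpi(2^l(x-y))$, and $|a(x)-a(y)|\lesssim \|\nabla a\|_\infty|x-y|$, so $2^l[a,S_l]$ is an operator whose kernel is dominated by $2^{ld}\Phi(2^l(x-y))$ for a fixed Schwartz $\Phi$; hence $|2^l[a,S_l]h|\lesssim Mh$ pointwise, where $M$ is the Hardy--Littlewood maximal operator. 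More precisely one writes $2^l[a,S_l]h(x)=\int 2^l(a(x)-a(y))\psi_l(x-y)h(y)\,dy$ and expands $a(x)-a(y)=\int_0^1\nabla a(y+\tau(x-y))\cdot(x-y)\,d\tau$, giving the pointwise maximal-function bound with constant $\|\nabla a\|_{L^\infty}$.

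\textbf{Combining the pieces.} Iterating, $|2^{kl}[a,S_l]^kf|\lesssim (\text{a composition of }k\text{ such smooth averaging operators applied to }|f|)$; but this alone only gives an $\ell^\infty$-in-$l$ bound, not the square-function bound, because the maximal operators are not almost-orthogonal. The resolution, following the standard treatment (e.g. as in the proof that commutators $[b,S_l]$ yield bounded square functions when $b\in\mathrm{BMO}$, here simplified by the stronger hypothesis $\nabla a\in L^\infty$), is to peel off one commutator and keep it as a genuine commutator while absorbing the rest into the maximal function: write $2^{kl}[a,S_l]^kf = 2^{(k-1)l}[a,S_l]^{k-1}\big(2^l[a,S_l]f\big)$, but note $[a,S_l]$ does not commute with $[a,S_l]^{k-1}$ in a helpful way; instead, expand the innermost $[a,S_l]f=aS_lf-S_l(af)$ and feed $aS_lf$ and $S_l(af)$ separately into the remaining commutator string. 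Each resulting term is of the form $2^{kl}[a,S_l]^{k-1}\big(c_l\cdot S_l h\big)$ with $h\in\{f,af\}$ and $c_l$ bounded, plus lower-complexity terms. The crucial gain is that one $S_l$ always survives acting on a \emph{fixed} function ($f$ or $af$), producing a Littlewood--Paley square function $(\sum_l|S_lh|^2)^{1/2}$, while the string $2^{(k-1)l}[a,S_l]^{k-1}$ in front is bounded on $L^2$ uniformly in $l$ by $C\|\nabla a\|_\infty^{k-1}$ (again via the maximal-function domination, since $L^2$-boundedness of $M$ suffices once the $\ell^2$ sum has been reduced to a single function). One must check that $af\in L^2$ when $f\in L^2$ — true since $a$ need not be bounded, so instead one keeps $a$ paired with a difference: at each stage use $a(x)S_lh(x)-S_l(ah)(x)$ rather than splitting, so that every occurrence of $a$ appears inside a difference $a(x)-a(y)$ and contributes a factor $\|\nabla a\|_\infty\,2^{-l}|2^l(x-y)|$ absorbed by the Schwartz decay.

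\textbf{Main obstacle.} The genuine difficulty is bookkeeping the $k$-fold iteration so that exactly one factor $S_l$ remains applied to a fixed $L^2$ function (yielding orthogonality in $l$) while the remaining $k$ commutator layers are handled by uniform-in-$l$ $L^2$ bounds coming from $|2^l[a,S_l](\cdot)|\lesssim\|\nabla a\|_\infty M(\cdot)$ — and, dually for \eqref{equation02}, arranging the same structure on the adjoint side. The estimate $|2^l[a,S_l]h|\lesssim \|\nabla a\|_{L^\infty} Mh$ is the engine; everything else is orthogonality and the boundedness of $M$ on $L^2$. I expect the cleanest writeup inducts on $k$, at each step isolating the outermost commutator $2^l[a,S_l]$, bounding it pointwise by $\|\nabla a\|_{L^\infty}M$, and invoking the inductive hypothesis (together with the vector-valued inequality for $M$, or simply Fefferman--Stein, to handle the $\ell^2$ sum) on the remaining $(k-1)$-fold expression.
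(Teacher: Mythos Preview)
Your duality reduction of \eqref{equation02} to \eqref{equation01} and the $k=0$ base case are correct and match the paper, which itself does not give a detailed argument but cites \cite[Lemma~2.3]{cdhx} for $k=1$ and asserts that $k\ge2$ is similar. The gap is in your inductive step. You propose to peel off one factor $2^l[a,S_l]$, dominate it pointwise by $\|\nabla a\|_{L^\infty}M$, and then apply Fefferman--Stein together with the inductive hypothesis to ``the remaining $(k-1)$-fold expression''. But in this paper $[a,S_l]^k$ denotes the \emph{iterated} commutator $[a,[a,\dots,[a,S_l]\dots]]$, whose kernel is $(a(x)-a(y))^k\psi_l(x-y)$ with $\psi_l$ the kernel of $S_l$; it is \emph{not} the composition $[a,S_l]\circ[a,S_l]^{k-1}$, and one cannot write $[a,S_l]^kf=[a,S_l]\bigl([a,S_l]^{k-1}f\bigr)$. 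Peeling one factor $a(x)-a(y)$ off the kernel leaves the remaining kernel $(a(x)-a(y))^{k-1}\psi_l(x-y)$ acting on $f$, not on $[a,S_l]^{k-1}f$, so the maximal function lands on $f$ itself and the $\ell^2$-in-$l$ orthogonality is lost. Your own (correct) pointwise bound $|2^{kl}[a,S_l]^kf|\lesssim Mf$ already exhibits the problem: it is uniform in $l$, and summing $\sum_l|Mf|^2$ diverges.

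A route closer to your plan that does produce a genuine induction: insert a fattened projection $\tilde S_l$ with $S_l=S_l\tilde S_l$ and use that $[a,\cdot]$ is a derivation, so $[a,S_l]^k=\sum_{j=0}^k\binom{k}{j}[a,S_l]^j[a,\tilde S_l]^{k-j}$. The $j=k$ term is $[a,S_l]^k(\tilde S_lf)$; now $\tilde S_l$ acts on the \emph{fixed} function $f$, your pointwise bound gives $|2^{kl}[a,S_l]^k(\tilde S_lf)|\lesssim M(\tilde S_lf)$, and Fefferman--Stein plus Littlewood--Paley for $(\tilde S_lf)_l$ close this piece. The terms $1\le j\le k-1$ are genuinely of lower order and fall to the inductive hypothesis (stated for all admissible bumps $\varpi$, hence also for $\tilde S_l$). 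The remaining term $j=0$, namely $S_l\bigl([a,\tilde S_l]^kf\bigr)$, is still of order $k$; controlling it requires exploiting the frequency localization of $S_l$ through an almost-orthogonality or Carleson-type estimate, and this is where the substantive work of \cite{cdhx} enters rather than a pure maximal-function argument.
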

Note that (\ref{equation02}) follows from (\ref{equation01}) and a duality argument.  For the case of $k=0$, (\ref{equation01}) follows from Littlewood-Paley theory. Inequality (\ref{equation01}) with $k=1$ was proved in \cite[Lemma 2.3]{cdhx}, while for the case of $k\geq 2$, the proof of (\ref{equation01}) is similar to the proof of \cite[Lemma 2.3]{cdhx}.

\begin{lemma}\label{lem0.2} Let $k\in\mathbb{N}$, $n\in\mathbb{Z}_+$ with $n\leq k$, $D$, $E$ be positive constants and $E\leq 1$,  $m$ be a  multipliers
such that $m\in L^1(\mathbb{R}^d)$, and
$$\|m\|_{L^{\infty}(\mathbb{R}^d)}\le D^{-k}E$$
and for all multi-indices $\gamma\in\mathbb{Z}_+^d$,
$$\|\partial^{\gamma}m\|_{L^{\infty}(\mathbb{R}^d)}\le D^{|\gamma|-k}.
$$
Let $a$ be a function on $\mathbb{R}^d$ with $\nabla a\in L^{\infty}(\mathbb{R}^d)$, and $T_m$ be the multiplier operator defined by
$$\widehat{T_{m}f}(\xi) = m(\xi)\widehat{f}(\xi).$$
Then for any $\varepsilon\in (0,\,1)$,
$$\|[a,\,T_{m}]^nf\|_{L^2(\mathbb{R}^d)}\lesssim D^{n-k}E^{\varepsilon}\|f\|_{L^2(\mathbb{R}^d)}.
$$
\end{lemma}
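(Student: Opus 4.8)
The plan is to interpolate between two endpoint estimates: an $L^2$ bound with a good power of $D$ but no decay in $E$, and an $L^2$ bound with a good power of $E$ but a weaker power of $D$. First I would reduce to the case $n=k$; for $n<k$ one writes $[a,T_m]^nf$ and notes that each commutator with $a$ costs a factor comparable to $\|\nabla a\|_{L^\infty}$ times a length scale, while the multiplier hypotheses at order $n$ (obtained by keeping only $n$ of the available derivative bounds) give exactly the scaling $D^{n-k}$; so it suffices to establish $\|[a,T_m]^kf\|_{L^2}\lesssim E^{\varepsilon}\|f\|_{L^2}$ when $D=1$, and then restore $D$ by the rescaling $\xi\mapsto D\xi$, $x\mapsto D^{-1}x$, which turns $m$ into a symbol satisfying the hypotheses with $D=1$ and turns $\nabla a$ into $D\nabla a$, accounting for the $D^{n-k}$ upon normalizing $\|\nabla a\|_{L^\infty}=1$.

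With $D=1$ fixed, the first endpoint is the crude bound: since the commutator $[a,T_m]^k$ is a sum of terms of the form $a^{j}T_m(a^{k-j}\,\cdot\,)$ with binomial coefficients, and more usefully since $[a,T_m]$ has a kernel $\big(a(x)-a(y)\big)m^\vee(x-y)$, iterating $k$ times produces the kernel $\big(a(x)-a(y)\big)^k m^\vee(x-y)$; using $|a(x)-a(y)|\le|x-y|$ and the fact that $\|\,|x|^k m^\vee\,\|_{L^1}\lesssim 1$ (which follows from the derivative bounds $\|\partial^\gamma m\|_{L^\infty}\lesssim 1$ for $|\gamma|\le k$ together with $m\in L^1$, via $|x|^{k}|m^\vee(x)|\lesssim (1+|x|)^{-d-1}$ after also invoking bounds for $|\gamma|=k+d+1$ — or, staying strictly within the stated hypotheses, by a Schur-type argument using only $|\gamma|\le k$ on a bounded region and $\|m\|_{L^1}$ globally), one gets $\|[a,T_m]^kf\|_{L^2}\lesssim \|f\|_{L^2}$, i.e. the bound with $E^0$. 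The second endpoint uses $E$: write $[a,T_m]^kf$ as an iterated commutator and expand, pulling the $L^\infty$ norm $\|m\|_{L^\infty}\le E$ through Plancherel; the cleanest route is to note that $T_m$ itself is bounded on $L^2$ with norm $\le E$, and each commutator with $a$ is bounded on $L^2$ with norm $\lesssim 1$ composed with... — here one must be careful, since naively $\|[a,T_m]^k\|\le 2^k\|a\|_\infty^k\|m\|_\infty$ is useless ($a$ unbounded). Instead I localize: decompose $f=\sum_\nu f\chi_{Q_\nu}$ over a grid of unit cubes, replace $a$ by $a-a(x_\nu)$ on each cube (legitimate inside one commutator layer because the outer $a(x)$ factors telescope), so that on $Q_\nu$ the relevant values of $a$ are $O(1)$, and then $\|[a,T_m]^kf\|_{L^2(Q_\nu)}\lesssim E\sum_{\nu'}(1+|\nu-\nu'|)^{-N}\|f\|_{L^2(Q_{\nu'})}$ using the rapid decay of $m^\vee$ away from the origin (again from the derivative bounds) to control the tails; summing in $\nu$ via Schur gives $\|[a,T_m]^kf\|_{L^2}\lesssim E\|f\|_{L^2}$.

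Given the two endpoints $\|[a,T_m]^kf\|_{L^2}\lesssim \min\{1,E\}\|f\|_{L^2}=E\|f\|_{L^2}$ (since $E\le1$) — wait, that already gives the full power $E^1$, hence $E^\varepsilon$ trivially since $E\le 1$; so in fact the interpolation is only needed if the second endpoint, as honestly provable within the stated hypotheses, degrades to something like $E^{1/2}$ or picks up a logarithmic loss in the number of scales. In that case I interpolate the genuine $E$-free bound against the lossy $E$-bound by the standard trick of splitting the frequency support of $m$ (or of $f$) into a piece where $|m|$ is comparable to its sup and a complementary piece, optimizing the split — or more simply by observing $E^{\text{(honest power)}}\le E^\varepsilon$ already for the stated range of $\varepsilon$. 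The main obstacle is precisely this second endpoint: making the $E$-dependence survive through $k$ layers of commutators with an unbounded function $a$, which forces the cube-localization argument and a careful bookkeeping of how the telescoping $a(x)-a(y)$ differences interact with the off-diagonal decay of $m^\vee$; once that is in hand, restoring $D$ by scaling and passing from $n=k$ to $n<k$ are routine.
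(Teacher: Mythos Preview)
Your approach has a genuine gap: both of your endpoint arguments rely on pointwise decay of the kernel $m^\vee$, but the hypotheses do not provide it. You invoke ``$|x|^k|m^\vee(x)|\lesssim(1+|x|)^{-d-1}$'' and ``rapid decay of $m^\vee$ away from the origin (again from the derivative bounds)'', but the assumptions are $\|\partial^\gamma m\|_{L^\infty}\le D^{|\gamma|-k}$, i.e.\ $L^\infty$ bounds on the derivatives of $m$. To deduce $|x|^N|m^\vee(x)|\in L^\infty$ one would need $\partial^\gamma m\in L^1$ for $|\gamma|=N$, which is not assumed. With only $m\in L^1$ and $\partial^\gamma m\in L^\infty$, the kernel $m^\vee$ is merely bounded and continuous, and the weighted kernel $(a(x)-a(y))^k m^\vee(x-y)$ need not be integrable off the diagonal. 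Your Schur/cube-localization scheme therefore lacks the off-diagonal decay it needs, and the hedged alternative (``Schur using $\|m\|_{L^1}$ globally'') does not control the growth of $(a(x)-a(y))^k$.

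The paper's proof bypasses kernel decay entirely. It decomposes the physical-space kernel dyadically, $K=m^\vee=\sum_{l\in\mathbb{Z}}W_l$ with $W_l(x)=K(x)\varphi(2^{-l}D^{-1}x)$ supported on $\{|x|\sim 2^lD\}$, and the decisive estimate is on the \emph{Fourier} side: because $\varphi$ has all vanishing moments ($\int\widehat{\varphi}(\eta)\eta^\gamma\,d\eta=0$), a Taylor expansion of $m$ to order $N$ gives $\|\widehat{W_l}\|_{L^\infty}\lesssim 2^{-l(N+1)}D^{-k}$, using precisely the $L^\infty$ bounds on $\partial^\gamma m$. Interpolating with the trivial bound $\|\widehat{W_l}\|_{L^\infty}\lesssim D^{-k}E$ yields $\|\widehat{W_l}\|_{L^\infty}\lesssim 2^{-l(N+1)(1-\varepsilon)}D^{-k}E^\varepsilon$, hence $\|T_{m,l}\|_{L^2\to L^2}$ with the same bound via Plancherel. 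The commutator is then handled using only the \emph{compact support} of $W_l$: localizing $f$ to cubes of side $2^lD$ and replacing $a$ by $a-a(x_0)$ (with $\|a-a(x_0)\|_{L^\infty}\lesssim 2^lD$) gives $\|[a,T_{m,l}]^n f\|_{L^2}\lesssim 2^{nl-l(N+1)(1-\varepsilon)}D^{n-k}E^\varepsilon\|f\|_{L^2}$. Summing in $l$ with $N$ chosen large for $l>0$ and small for $l\le 0$ finishes the proof. The mechanism is smoothness of $m$ plus vanishing moments of the cutoff, not decay of $m^\vee$; your proposal is missing this idea. (Your reductions by scaling to $D=1$ and from $n<k$ to $n=k$ are also not clean---the derivative bound $D^{|\gamma|-k}$ does not become $D^{|\gamma|-n}$ when $D<1$---but this is secondary to the main gap.)
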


\begin{proof} Our argument here is a generalization  of the proof of Lemma 2 in \cite{huguoen}, together with some more refined estimates,   see also \cite[lemma 2.3]{mahu} for the original version.    We only consider the case $1\leq n\leq k$, since
$$\|[a,\,T_{m}]^0f\|_{L^2(\mathbb{R}^d)}\lesssim D^{-k}E^{\varepsilon}\|f\|_{L^2(\mathbb{R}^d)}
$$
holds obviously.

Let $\varphi\in C^{\infty}_0(\mathbb{R}^d)$ be the same  as in (\ref{eq2.varphi}). Recall that ${\rm supp}\,\varphi\subset \{1/4\leq |x|\leq 4\}$, and
$$\sum_{j\in\mathbb{Z}}\varphi(2^{-j}x)\equiv 1,\,\,|x|>0.$$
Let  $\varphi_{l,D}(x)=\varphi(2^{-l}D^{-1}x)$ for $l\in\mathbb{Z}$. Set
$$W_{l}(x)=K(x)\varphi_{l,D}(x),\,\,l\in\mathbb{Z},$$
where $K$ is the inverse Fourier transform of $m$.
Observing that for all multi-indices $\gamma\in\mathbb{Z}_+^d$, $\partial^{\gamma}\varphi(0)=0$,  we thus have that
$$\int_{\mathbb{R}^d}\widehat{\varphi}(\xi)\xi^{\gamma}d\xi=0.$$
This, in turn, implies that for all $N\in \mathbb{N}$ and $\xi\in\mathbb{R}^d$,
\begin{eqnarray}\label{equation03}
|\widehat{W_l}(\xi)|&=&\Big|\int_{\mathbb{R}^d}\Big(m(\xi-\frac{\eta}{2^lD})-\sum_{|\gamma|\leq N}\frac{1}{\gamma!}\partial^{\gamma}m(\xi)(\frac{\eta}{2^lD})^{\gamma}\Big)\widehat{\varphi}(\eta)d\eta\Big|\\
&\lesssim&2^{-l(N+1)}D^{-(N+1)}\sum_{|\gamma|=N+1}\|\partial^{\gamma}m\|_{L^{\infty}(\mathbb{R}^d)}
\int_{\mathbb{R}^d}|\eta|^{N+1}|\widehat{\varphi}(\eta)|d\eta\nonumber\\
&\lesssim &2^{-l(N+1)}D^{-k}.\nonumber
\end{eqnarray}
On the other hand, a trivial computation gives that for $l\in\mathbb{Z}$,
\begin{eqnarray}\label{equation04}
\|\widehat{W_l}\|_{L^{\infty}(\mathbb{R}^d)}\leq \|m\|_{L^{\infty}(\mathbb{R}^d)}\|\widehat{\varphi_{l,D}}\|_{L^1(\mathbb{R}^d)}\lesssim D^{-k}E.
\end{eqnarray}
Combining the inequalities (\ref{equation03}) and (\ref{equation04}) shows that for any $l\in\mathbb{Z}$, $N\in\mathbb{N}$ and $\varepsilon\in (0,\,1)$,
\begin{eqnarray}\label{equation05}
&&\|\widehat{W_l}\|_{L^{\infty}(\mathbb{R}^d)}\lesssim 2^{-l(N+1)(1-\varepsilon)}D^{-k}
E^{\varepsilon}.
\end{eqnarray}
Let $T_{m,l}$ be the convolution operator with kernel $W_l$. Inequality (\ref{equation05}), via Plancherel's theorem, tells us that for $l\in\mathbb{Z}$ and $N\in \mathbb{N}$,
\begin{eqnarray}\label{equation06}
\|T_{m,l}f\|_{L^2(\mathbb{R}^d)}\lesssim 2^{-l(N+1)(1-\varepsilon)}
D^{-k}
E^{\varepsilon}\|f\|_{L^2(\mathbb{R}^d)}.
\end{eqnarray}

We claim that for all $l\in\mathbb{Z}$, $N\in\mathbb{N}$ and $\varepsilon\in (0,\,1)$,
\begin{eqnarray}\label{equation07}
\|[a,\,T_{m,l}]^nf\|_{L^2(\mathbb{R}^d)}\lesssim 2^{-l(N+1)(1-\varepsilon)+ln}D^{n-k}E^{\varepsilon}\|f\|_{L^2(\mathbb{R}^d)}.
\end{eqnarray}
Observe that ${\rm supp}\,W_l\subset \{x:\,|x|\leq D2^{l+2}\}$. If $I$ is a cube having side length $2^lD$, and $f\in L^2(\mathbb{R}^d)$ with ${\rm supp}\,f\subset I$, then $T_{m,l}f\subset 100dI$. Therefore,  to prove (\ref{equation07}), we may assume that ${\rm supp}\, f\subset I$ with $I$ a cube having side length $2^lD$. Let $x_0\in I$ and $a_I(y)=(a(y)-a(x_0))\chi_{100dI}(y)$. Then
$$\|a_I\|_{L^{\infty}(\mathbb{R}^d)}\lesssim 2^lD.$$
Write
$$[a,\,T_{m,l}]^nf(x)=\sum_{i=0}^n(a_I(x))^iC_n^i
T_{m,l}\big((-a_I)^{k-i}f\big)(x).$$
It then follows from (\ref{equation06}) that
\begin{eqnarray*}\|[a,\,T_{m,\,l}]^nf\|_{L^2(\mathbb{R}^d)}&\lesssim &\sum_{i=0}^n2^{il}D^i\|T_{m,l}\big((-a_I)^{n-i}f\big)\|_{L^2(\mathbb{R}^d)}
\\
&\lesssim & 2^{nl-l(N+1)(1-\varepsilon)}D^{n-k}
E^{\varepsilon}\|f\|_{L^2(\mathbb{R}^d)}.
\end{eqnarray*}
This yields (\ref{equation07}).

We now conclude the proof of Lemma \ref{lem0.2}. Recall that $E\in (0,\,1]$. It suffices to prove Lemma \ref{lem0.2} for the case of $\varepsilon\in (2/3,\,1)$.
For fixed $\varepsilon\in (2/3,\,1)$, we choose $N_1\in\mathbb{N}$ such that $(N_1+1)(1-\varepsilon)>n$, $N_2\in\mathbb{N}$ such that $(N_2+1)(1-\varepsilon)<n$. It follows from (\ref{equation07}) that
\begin{eqnarray*}
\|[a,\,T_{m}]^nf\|_{L^2(\mathbb{R}^d)}&\leq&\sum_{l\leq 0}\|[a,\,T_{m,\,l}]^nf\|_{L^2(\mathbb{R}^d)} + \sum_{l\in\mathbb{N}}\|[a,\,T_{m,l}]^nf\|_{L^2(\mathbb{R}^d)}\\
&\lesssim&D^{n-k}E^{\varepsilon}\sum_{l\in\mathbb{N}} 2^{-l(N_1+1)(1-\varepsilon)+ln}\|f\|_{L^2(\mathbb{R}^d)}\\
&&+D^{n-k}E^{\varepsilon}\sum_{l\leq 0} 2^{-l(N_2+1)(1-\varepsilon)+ln}\|f\|_{L^2(\mathbb{R}^d)}\\
&\lesssim&D^{n-k}E^{\varepsilon}\|f\|_{L^2(\mathbb{R}^d)}.
\end{eqnarray*}
This completes the proof of Lemma \ref{lem0.2}.
\end{proof}
\begin{lemma}\label{lem0.3}Let $k\in\mathbb{N}$, $n\in\mathbb{Z}_+$ with $n\leq k$, $D$, $A$ and $B$ be positive constants with $A,\,B<1$,  $m$ be a  multipliers
such that $m\in L^1(\mathbb{R}^d)$, and
$$\|m\|_{L^{\infty}(\mathbb{R}^d)}\le D^{-k}(AB)^{k+1},$$
and for all multi-indices $\gamma\in\mathbb{Z}^d_+$,
$$\|\partial^{\gamma}m\|_{L^{\infty}(\mathbb{R}^d)}\le D^{|\gamma|-k}B^{-|\gamma|}.
$$
Let $T_m$ be the multiplier operator defined by
$$\widehat{T_{m}f}(\xi) = m(\xi)\widehat{f}(\xi).$$
Let $a$ be a function on $\mathbb{R}^d$ such that $\nabla a\in L^{\infty}(\mathbb{R}^d)$. Then for any $\sigma\in (0,\,1)$,
\begin{eqnarray}\label{equation0.summing}\big\|[a,\,T_{m}]^nf\big\|_{L^2(\mathbb{R}^d)}\lesssim D^{n-k}A^{\sigma}B^{k-n+\sigma}\|f\|_{L^2(\mathbb{R}^d)}.
\end{eqnarray}
\end{lemma}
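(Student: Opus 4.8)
The plan is to deduce Lemma \ref{lem0.3} from Lemma \ref{lem0.2} by a rescaling of both the frequency variable and the multiplier. Compared with the hypotheses of Lemma \ref{lem0.2}, the two new features here are the factor $B^{-|\gamma|}$ attached to the bound for $\partial^{\gamma}m$ and the sharper size bound $\|m\|_{L^\infty}\le D^{-k}(AB)^{k+1}$ in place of $\|m\|_{L^\infty}\le D^{-k}E$. To absorb the derivative factor I would set $D'=D/B$, and to repair the normalization I would replace $m$ by $\widetilde m=B^{k}m$; the key identity is
$$B^{k}D^{|\gamma|-k}B^{-|\gamma|}=(D/B)^{|\gamma|-k}=(D')^{|\gamma|-k},$$
so that $\widetilde m$ satisfies the derivative bounds required in Lemma \ref{lem0.2} with $D'$ in place of $D$, on the nose.

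Next I would check that the triple $(\widetilde m,\,D',\,E)$ with $E=(AB)^{k+1}$ meets all the hypotheses of Lemma \ref{lem0.2}: one has $\widetilde m\in L^1(\mathbb{R}^d)$, the derivative bounds just noted, and
$$\|\widetilde m\|_{L^\infty(\mathbb{R}^d)}=B^{k}\|m\|_{L^\infty(\mathbb{R}^d)}\le B^{k}D^{-k}(AB)^{k+1}=(D')^{-k}E,$$
with $E=(AB)^{k+1}\le 1$ since $A,\,B\in(0,\,1)$. Applying Lemma \ref{lem0.2} then gives, for every $\varepsilon\in(0,\,1)$,
$$\big\|[a,\,T_{\widetilde m}]^nf\big\|_{L^2(\mathbb{R}^d)}\lesssim (D')^{n-k}E^{\varepsilon}\|f\|_{L^2(\mathbb{R}^d)}.$$
To return to $m$, I would use that $T_{\widetilde m}=B^{k}T_m$ and that a scalar constant factors out of the iterated commutator, so $[a,\,T_{\widetilde m}]^n=B^{k}[a,\,T_m]^n$; unwinding the powers of $B$ and $D$ yields
$$\big\|[a,\,T_m]^nf\big\|_{L^2(\mathbb{R}^d)}\lesssim D^{n-k}A^{(k+1)\varepsilon}B^{(k+1)\varepsilon-n}\|f\|_{L^2(\mathbb{R}^d)}.$$

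Finally, given $\sigma\in(0,\,1)$, I would choose $\varepsilon\in\big(\frac{k+\sigma}{k+1},\,1\big)$, an interval which is nonempty precisely because $\sigma<1$. Then $(k+1)\varepsilon>k+\sigma\ge\sigma$ and $(k+1)\varepsilon-n>k-n+\sigma$, so, lowering the exponents with the help of $A,\,B\in(0,\,1)$, we get $A^{(k+1)\varepsilon}\le A^{\sigma}$ and $B^{(k+1)\varepsilon-n}\le B^{k-n+\sigma}$, which is exactly (\ref{equation0.summing}). I do not expect a genuine obstacle: the argument is essentially bookkeeping, and the only points needing care are getting the power $B^{k}$ in $\widetilde m$ exactly right so that the rescaled multiplier matches the normalization demanded by Lemma \ref{lem0.2}, and confirming that a single $\varepsilon<1$ can beat both target exponents simultaneously — which is guaranteed by the constraint $\sigma<1$.
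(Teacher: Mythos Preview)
Your argument is correct and, in fact, more efficient than the paper's. Both proofs end at the same place --- the bound $\|[a,T_m]^nf\|_{L^2}\lesssim D^{n-k}A^{(k+1)\varepsilon}B^{(k+1)\varepsilon-n}\|f\|_{L^2}$ followed by the choice of $\varepsilon$ close enough to $1$ --- but they get there by different routes. The paper reopens the proof of Lemma~\ref{lem0.2}: it takes the same spatial decomposition $W_l(x)=K(x)\varphi_{l,D}(x)$, redoes the Taylor-remainder estimate for $\widehat{W_l}$ now carrying the extra factor $B^{-(N+1)}$ coming from the derivative bounds, interpolates, and then has to choose $N_1,N_2$ so that both the $2^l$-sum converges and the resulting $B$-exponent $-(N_i+1)(1-\varepsilon)+(k+1)\varepsilon$ stays above $k-n+\sigma$. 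Your rescaling $\widetilde m=B^k m$, $D'=D/B$ packages all of that bookkeeping into the single identity $B^k D^{|\gamma|-k}B^{-|\gamma|}=(D')^{|\gamma|-k}$, so Lemma~\ref{lem0.2} applies as a black box and no second pass through the $N_1,N_2$ selection is needed. What the paper's approach buys is transparency about how the $B$-dependence threads through each step of the dyadic decomposition; what yours buys is brevity and a clean reduction that would also carry over verbatim to any future strengthening of Lemma~\ref{lem0.2}.
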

\begin{proof} Let $T_{m,l}$ be the same as in the proof of Lemma \ref{lem0.2}. As in the proof of Lemma \ref{lem0.2},  we know that for all $l\in\mathbb{Z}$, $N\in\mathbb{N}$ and $\varepsilon\in (0,\,1)$,
\begin{eqnarray}\label{equation0.summing1}\big\|[a,\,T_{m,l}]^nf\big\|_{L^2(\mathbb{R}^d)}&\lesssim& 2^{-l(N+1)(1-\varepsilon)+nl}D^{n-k}\\
&&\times B^{-(N+1)(1-\varepsilon)+(k+1)\varepsilon}A^{(k+1)\varepsilon}\|f\|_{L^2(\mathbb{R}^d)}.\nonumber
\end{eqnarray}

For each fixed $\sigma\in (0,\,1)$, we choose $\varepsilon\in (0,\,1)$ such that
$$(k+1)\varepsilon-k-\sigma>1-\varepsilon,$$
and choose $N_1\in\mathbb{N}$ such that
$$(N_1+1)(1-\varepsilon)>n,\,\, -(N_1+1)(1-\varepsilon)+(k+1)\varepsilon>k-n+\sigma.$$
Also, we choose $N_2\in\mathbb{N}$ such that $(N_2+1)(1-\varepsilon)<n$. Note that such a $N_2$ satisfies $$-(N_2+1)(1-\varepsilon)+(k+1)\varepsilon>k-n+\sigma.$$
Recalling that $B<1$, we have that
$$B^{-(N_1+1)(1-\varepsilon)+(k+1)\varepsilon}\leq B^{k-n+\sigma},\,\,B^{-(N_2+1)(1-\varepsilon)+(k+1)\varepsilon}
\leq B^{k-n+\sigma}.$$
Our desired estimate (\ref{equation0.summing}) now follows (\ref{equation0.summing1}) by
\begin{eqnarray*}
\|[a,\,T_{m}]^nf\|_{L^2(\mathbb{R}^d)}
&\lesssim&D^{n-k}A^{\sigma}B^{k-n+\sigma}\sum_{l\in\mathbb{N}} 2^{-l(N_1+1)(1-\varepsilon)+ln}\|f\|_{L^2(\mathbb{R}^d)}\\
&&+D^{n-k}A^{\sigma}B^{k-n+\sigma}\sum_{l\leq 0} 2^{-l(N_2+1)(1-\varepsilon)+ln}\|f\|_{L^2(\mathbb{R}^d)}\\
&\lesssim&D^{n-k}B^{k-n+\sigma}A^{\sigma}\|f\|_{L^2(\mathbb{R}^d)},
\end{eqnarray*}
since $(k+1)\varepsilon>\sigma$ and $A<1$. This completes the proof of Lemma \ref{lem0.3}.
\end{proof}
The following conclusion is a variant of Theorem 1 in \cite{huguoen}, and will be useful in the proof of Theorem \ref{thm1.1}.
\begin{theorem}\label{thm0.21}
Let $k\in\mathbb{N}$, $A\in (0,\,1/2)$ be a constant, $\{\mu_j\}_{j\in\mathbb{Z}}$ be a sequence of functions on $\mathbb{R}^d\backslash \{0\}$. Suppose that for some $\beta\in (1,\,\infty)$,
$$\|\mu_j\|_{L^1(\mathbb{R}^d)}\lesssim 2^{-jk},\, |\widehat{\mu_j}(\xi)|\lesssim 2^{-jk}\min\{|A2^j\xi|^{k+1},\,\log ^{-\beta}(2+|2^{j}\xi|)\},$$
and for all multi-indices $\gamma\in\mathbb{Z}_+^d$,
$$\|\partial^{\gamma}\widehat{\mu_j}\|_{L^{\infty}(\mathbb{R}^d)}\lesssim 2^{j(|\gamma|-k)}.$$
Let $K(x)=\sum_{j\in\mathbb{Z}}\mu_j(x)$ and $T$ be the convolution operator with kernel $K$. Then for any $\varepsilon\in (0,\,1)$,  function $a$ on $\mathbb{R}^d$ with $\nabla a\in L^{\infty}(\mathbb{R}^d)$,
$$\|[a,\,T]^kf\|_{L^2(\mathbb{R}^d)}\lesssim \log ^{-\varepsilon\beta+1}\big(\frac{1}{A}\big)\|f\|_{L^2(\mathbb{R}^d)}.
$$
\end{theorem}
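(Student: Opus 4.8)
\emph{Proof idea.} Following the approximation philosophy of \cite{wat}, the plan is to split $T$ into a doubly indexed family of Fourier‑localized operators, reindex so that one index carries genuine Littlewood--Paley orthogonality while the other carries the smallness, and then interlace the square‑function bounds of Lemma~\ref{lem0.1} with the operator‑norm bounds of Lemmas~\ref{lem0.2}--\ref{lem0.3}. Since $\beta>1$ I may assume $\varepsilon\beta>1$: if $\varepsilon\beta\le1$, replace $\varepsilon$ by some $\varepsilon'\in(1/\beta,1)$ and use $\log^{-\varepsilon'\beta+1}(1/A)\lesssim1\lesssim\log^{-\varepsilon\beta+1}(1/A)$ for $A\in(0,1/2)$. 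I also normalize $\|\nabla a\|_{L^\infty}=1$.

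\emph{Decomposition and pointwise bounds.} Fix a radial $\phi\in C^\infty_0$ with $\mathrm{supp}\,\phi\subset\{1/2\le|\xi|\le2\}$ and $\sum_{l\in\mathbb Z}\phi(2^{-l}\xi)\equiv1$ for $\xi\ne0$. Put $\widehat{\mu_{j,m}}(\xi)=\phi(2^{j-m}\xi)\widehat{\mu_j}(\xi)$, so $\mu_j=\sum_m\mu_{j,m}$ with $\widehat{\mu_{j,m}}$ supported where $|2^j\xi|\sim2^m$, and reindex by the frequency scale $l=m-j$: with $\nu_{l,m}:=\mu_{m-l,m}$ one has $\mathrm{supp}\,\widehat{\nu_{l,m}}\subset\{2^{l-1}\le|\xi|\le2^{l+1}\}$ and $K=\sum_m\sum_l\nu_{l,m}$. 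Writing $T_{l,m}$ for convolution with $\nu_{l,m}$, the fact that $[a,\,\cdot\,]$ is a derivation for composition gives $[a,T]^k=\sum_m\sum_l[a,T_{l,m}]^k$. On $\mathrm{supp}\,\widehat{\nu_{l,m}}$ one has $|2^j\xi|\sim2^m$ (with $j=m-l$), so the hypotheses give $\|\widehat{\nu_{l,m}}\|_{L^\infty}\lesssim2^{-jk}E_m$ with $E_m:=\min\{(A2^m)^{k+1},\log^{-\beta}(2+2^m)\}\le1$; moreover $\widehat{\nu_{l,m}}\in L^1(\mathbb R^d)$ (bounded, compactly supported), and since $l+j=m$ the Leibniz rule yields $\|\partial^\gamma\widehat{\nu_{l,m}}\|_{L^\infty}\lesssim2^{j(|\gamma|-k)}$ when $m\ge0$ and $\lesssim2^{j(|\gamma|-k)}2^{-m|\gamma|}$ when $m<0$. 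Hence for $0\le n\le k$: Lemma~\ref{lem0.2} (with $D=2^j$, $E=E_m$) gives $\|[a,T_{l,m}]^ng\|_{L^2}\lesssim2^{j(n-k)}E_m^{\varepsilon}\|g\|_{L^2}$ when $m\ge0$, and Lemma~\ref{lem0.3} (with $D=2^j$, the given $A$, and $B=2^m$) gives $\|[a,T_{l,m}]^ng\|_{L^2}\lesssim2^{j(n-k)}A^{\sigma}2^{m(k-n+\sigma)}\|g\|_{L^2}$ for any $\sigma\in(0,1)$ when $m<0$.

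\emph{Orthogonality in $l$ and summation in $m$.} Fix $m$ and choose $\widetilde S_l$ with real radial symbol $\rho(2^{-l}\xi)$, $\rho$ smooth, $\rho\equiv1$ on $\{1/2\le|\xi|\le2\}$ and $\mathrm{supp}\,\rho\subset\{1/4\le|\xi|\le4\}$, so $T_{l,m}=\widetilde S_lT_{l,m}\widetilde S_l$ and, by the derivation property,
$$[a,T_{l,m}]^k=\sum_{k_1+k_2+k_3=k}\binom{k}{k_1,k_2,k_3}[a,\widetilde S_l]^{k_1}[a,T_{l,m}]^{k_2}[a,\widetilde S_l]^{k_3}.$$
For each term, test against $g\in L^2$, move $[a,\widetilde S_l]^{k_1}$ onto $g$ (self‑adjoint up to sign), attach the weight $2^{k_1l}$ to $g$ and $2^{-k_1l}$ to the rest, and apply Cauchy--Schwarz first in $x$ and then in $l$, bounding the sum by the product of $\big\|\big(\sum_l|2^{-k_1l}[a,T_{l,m}]^{k_2}[a,\widetilde S_l]^{k_3}f|^2\big)^{1/2}\big\|_{L^2}$ and $\big\|\big(\sum_l|2^{k_1l}[a,\widetilde S_l]^{k_1}g|^2\big)^{1/2}\big\|_{L^2}$. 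The second factor is $\lesssim\|g\|_{L^2}$ by (the proof of) Lemma~\ref{lem0.1}. In the first, estimate $[a,T_{l,m}]^{k_2}$ in operator norm by the bounds above; since $j=m-l$ and $k_1+k_2+k_3=k$, the surviving powers of $2^l$ collapse exactly to $2^{lk_3}$, which Lemma~\ref{lem0.1} absorbs through $2^{lk_3}[a,\widetilde S_l]^{k_3}f$, leaving $E_m^{\varepsilon}2^{m(k_2-k)}\|f\|_{L^2}$ for $m\ge0$ and $A^{\sigma}2^{m\sigma}\|f\|_{L^2}$ for $m<0$. Summing the finitely many $(k_1,k_2,k_3)$ and using $2^{m(k_2-k)}\le1$ when $m\ge0$ yields $\big\|\sum_l[a,T_{l,m}]^kf\big\|_{L^2}\lesssim E_m^{\varepsilon}\|f\|_{L^2}$ for $m\ge0$ and $\lesssim A^{\sigma}2^{m\sigma}\|f\|_{L^2}$ for $m<0$. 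Finally $\sum_{m<0}A^{\sigma}2^{m\sigma}\lesssim A^{\sigma}$, and with $N=\lfloor\log_2(1/A)\rfloor$ the sum $\sum_{m\ge0}E_m^{\varepsilon}$ splits into the range $0\le m\le N$ (a geometric‑type sum pinned at the crossover $A2^m\sim1$, contributing $\lesssim(\log N)N^{-\varepsilon\beta}$) and the range $m>N$ (where $E_m\sim m^{-\beta}$, contributing $\sum_{m>N}m^{-\varepsilon\beta}\lesssim N^{1-\varepsilon\beta}$ since $\varepsilon\beta>1$); as $N\sim\log(1/A)$, this gives $\|[a,T]^kf\|_{L^2}\lesssim(A^{\sigma}+\log^{-\varepsilon\beta+1}(1/A))\|f\|_{L^2}\lesssim\log^{-\varepsilon\beta+1}(1/A)\|f\|_{L^2}$.

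\emph{Main difficulty.} The crux is the exponent bookkeeping in the commutator expansion: after inserting the Littlewood--Paley weights $2^{\pm k_il}$ one must verify that every power of $2^l$ cancels — this uses both $k_1+k_2+k_3=k$ and the scale identity $l+j=m$ — so that Lemma~\ref{lem0.1} disposes of them all and the only surviving gain is $E_m^{\varepsilon}$ (resp.\ $A^{\sigma}2^{m\sigma}$) supplied by Lemma~\ref{lem0.2} or \ref{lem0.3}. Securing a clean $L^2$ operator bound for the middle factor $[a,T_{l,m}]^{k_2}$ \emph{uniformly in $l$} in both regimes $m\ge0$ and $m<0$ — which is exactly what forces the split between Lemmas~\ref{lem0.2} and \ref{lem0.3} — and organizing the $m$‑summation so that it collapses to precisely the exponent $1-\varepsilon\beta$, are the delicate steps; everything else is routine Littlewood--Paley manipulation.
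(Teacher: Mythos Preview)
Your argument is correct and follows essentially the same strategy as the paper's proof: localize each $\widehat{\mu_j}$ to frequency shells $|2^j\xi|\sim 2^m$, sandwich the resulting pieces between Littlewood--Paley projectors, expand the iterated commutator via the derivation identity, and combine the square-function bounds of Lemma~\ref{lem0.1} with the operator bounds of Lemmas~\ref{lem0.2}--\ref{lem0.3}. Your pair $(l,m)$ is just the paper's $(l-j,\,l)$ after reindexing, and your device of inserting an auxiliary $\widetilde S_l$ with slightly wider support plays the same role as the paper's cubic partition $\sum_l\varpi^3(2^{-l}\xi)=1$. The exponent bookkeeping you highlight (the cancellation of all $2^l$-powers using $k_1+k_2+k_3=k$ and $l+j=m$) is exactly the computation the paper carries out.

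One small point: your treatment of the range $0\le m\le N$ is slightly imprecise. The phrase ``geometric-type sum pinned at the crossover $A2^m\sim1$'' suggests summing $(A2^m)^{(k+1)\varepsilon}$ up to $m=N$, which only gives $O(1)$. The paper avoids this by splitting instead at $\tfrac12\log_2(1/A)$, so that on the lower range the geometric sum yields $A^{(k+1)\varepsilon/2}$ and on the upper range the tail $\sum m^{-\varepsilon\beta}$ gives $N^{1-\varepsilon\beta}$ directly. Your bound $(\log N)N^{-\varepsilon\beta}$ is in fact correct if one locates the crossover of the \emph{minimum} (at $m_0\approx N-C\log N$, not at $m=N$) and splits there; either way the final estimate $\log^{-\varepsilon\beta+1}(1/A)$ follows.
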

\begin{proof} At first, we claim that for $k_1\in \mathbb{Z}$ with $0\leq k_1\leq k$,
\begin{eqnarray}\label{equation0.fourier}
\|Tf\|_{L^2_{k_1-k}(\mathbb{R}^d)}\lesssim \|f\|_{L^2_{k_1}(\mathbb{R}^d)},
\end{eqnarray}
where $\|f\|_{L^2_{k_w}(\mathbb{R}^d)}$  for $k_2\in\mathbb{Z}$ is the Sobolev norm defined as
$$\|f\|_{L^2_{k_2}(\mathbb{R}^d)}^2=\int_{\mathbb{R}^d}|\xi|^{2k_2}|\widehat{f}(\xi)|^2d\xi.$$
In fact, by the Fourier transfrom estimate of $\mu_j$, we have that for each fixed $\xi\in\mathbb{R}^d\backslash \{0\}$,
$$\sum_{j\in\mathbb{Z}}|\widehat{\mu_j}(\xi)|\lesssim \sum_{j:\, 2^j\geq |\xi|^{-1}}2^{-jk}+|\xi|^{k+1}\sum_{j:\,2^j\leq |\xi|^{-1}}2^{j}\lesssim |\xi|^k.$$
This, together with Plancherel's theorem, gives (\ref{equation0.fourier}).

Let $U_j$ be the convolution operator with kernel $\mu_j$, and $\varpi\in C^{\infty}_0(\mathbb{R}^d)$ such that $0\leq \varpi\leq 1$, ${\rm supp}\,\varpi\subset \{1/4\leq |\xi|\leq 4\}$ and
$$\sum_{l\in\mathbb{Z}}\varpi^3(2^{-l}\xi)=1,\,\,|\xi|>0.$$
Set $m_j(x)=\widehat{\mu_j}(\xi)$,
and $m_j^l(\xi)=m_j(\xi)\varpi(2^{j-l}\xi)$. Define the operator $U_j^l$ by
$$\widehat{U_j^lf}(\xi)=m_j^l(\xi)\varpi(2^{j-l}\xi)\widehat{f}(\xi).$$
Now let  $S_l$ be the multiplier operator defined as in  Lemma \ref{lem0.1}. Let $f\in C^{\infty}_0(\mathbb{R}^d)$, $B=B(0,\,R)$ be a ball such that ${\rm supp}\,f\subset B$, and let $x_0\in B$. We can write
\begin{eqnarray}\label{equation0equiv}
\quad[a,\,T]^kf&=&\sum_{n=0}^kC_k^n(a-a(x_0))^{k-n}T\big((a(x_0)-a)^nf)(x)\\
&=&\sum_{n=0}^kC_k^n(a-a(x_0))^{k-n}\sum_{l}\sum_j(S_{l-j}U_{j}^lS_{l-j})\big((a(x_0)-a)^nf)\nonumber\\
&=&\sum_{l}\sum_j[a,\,S_{l-j}U_{j}^lS_{l-j}]^kf.\nonumber
\end{eqnarray}

We now estimate $
\big\|[a,\,S_{l-j}U_j^lS_{l-j}]^kf\big\|_{L^2(\mathbb{R}^d)}.
$
At first, we have that $m_j^l\in L^1(\mathbb{R}^d)$ and
\begin{eqnarray*}
|m_j^l(\xi)|\lesssim 2^{-jk}\min\{A^{k+1}2^{l(k+1)},\,\log^{-\beta} (2+2^l)\}.\end{eqnarray*}
Furthermore, by the fact that
$$|\partial^{\gamma}\phi(2^{j-l}\xi)|\lesssim 2^{(j-l)|\gamma|},\,\,|\partial^{\gamma}m_j(\xi)|\lesssim 2^{j(|\gamma|-k)},$$
it then follows that for all $\gamma\in\mathbb{Z}_+^d$,
$$|\partial^{\gamma}m_j^l(\xi)|\lesssim \Big \{\begin{array}{ll}2^{j(|\gamma|-k)}\,\,&\hbox{if}\,\,l\in\mathbb{N}\\
2^{j(|\gamma|-k)}2^{-|\gamma|l},\,\,&\hbox{if}\,\,l\leq 0.\end{array}
$$
An application of Lemma \ref{lem0.2} (with $D=2^j$, $E=\min\{(A2^l)^{k+1},\,l^{-\beta}\}$) yields
\begin{eqnarray}\label{equation0.l2estimate1}
\|[a,\,U_j^l]^nf\|_{L^2(\mathbb{R}^d)}\lesssim 2^{j(n-k)}\min\{(A2^l)^{k+1},\, l^{-\beta}\}^{\varepsilon}\|f\|_{L^2(\mathbb{R}^d)},\,\,l\in\mathbb{N}.
\end{eqnarray}
On the other hand, we deduce from Lemma \ref{lem0.3} (with $D=2^j$ and $B=2^l$) that for some $\sigma\in (0,\,1)$,
\begin{eqnarray}\label{equation0.l2estimate2}
\|[a,\,U_j^l]^nf\|_{L^2(\mathbb{R}^d)}\lesssim 2^{j(n-k)}2^{l(k-n)}A^{\sigma}2^{\sigma l}\|f\|_{L^2(\mathbb{R}^d)},\,\,l\leq 0.
\end{eqnarray}
Write
$$[a,\,S_{l-j}U_j^lS_{l-j}]^k=\sum_{n_1=0}^kC_{k}^{n_1}[a,\,S_{l-j}]^{n_1}\sum_{n_2=0}^{k-n_1}C_{k-n_1}^{n_2}
[a,\,U_j^l]^{n_2}[a,\,S_{l-j}]^{k-n_1-n_2}.
$$
For fixed $n_1,\,n_2,\,n_3\in\mathbb{Z}_+$ with $n_1+n_2+n_3=k$, a standard computation involving Lemma \ref{lem0.1}, estimates (\ref{equation0.l2estimate1}) and (\ref{equation0.l2estimate2})  leads to that for $l\in\mathbb{N}$,
\begin{eqnarray*}
&&\big\|\sum_{j\in\mathbb{Z}}[a,\,S_{l-j}]^{n_1}[a,\,U_j^l]^{n_2}[a,\,S_{l-j}]^{n_3}
f\big\|^2_{L^2(\mathbb{R}^d)}\\
&&\quad\lesssim\sum_{j\in\mathbb{Z}}2^{2(j-l)n_1}\|[a,\,U_{j}^l]^{n_2}[a,\,S_{l-j}]^{n_3}f\|^2_{L^2
(\mathbb{R}^d)}\\
&&\quad\lesssim\min\{(A2^l)^{k+1},\, l^{-\beta}\}^{2\varepsilon}
\|f\|^2_{L^2(\mathbb{R}^d)};
\end{eqnarray*}
and for $l\in\mathbb{Z}_-$,
\begin{eqnarray*}
&&\big\|\sum_{j\in\mathbb{Z}}[a,\,S_{l-j}]^{n_1}[a,\,U_j^l]^{n_2}[a,\,S_{l-j}]^{n_3}
f\big\|^2_{L^2(\mathbb{R}^d)}\\
&&\quad\lesssim\sum_{j\in\mathbb{Z}}2^{2(j-l)n_1}\|[a,\,U_{j}^l]^{n_2}[a,\,S_{l-j}]^{n_3}f\|^2_{L^2
(\mathbb{R}^d)}\\
&&\quad\lesssim A^{2\sigma}2^{2\sigma l}
\|f\|^2_{L^2(\mathbb{R}^d)}.
\end{eqnarray*}
 Therefore,
\begin{eqnarray*}
\sum_{l}\|[a,\,S_{l-j}U_j^lS_{l-j}]^kf\|_{L^2(\mathbb{R}^d)}&=&\sum_{l:\,l>\log (\frac{1}{\sqrt{A} })}\|[a,\,S_{l-j}U_j^lS_{l-j}]^kf\|_{L^2(\mathbb{R}^d)}\\
&&+\sum_{l:\,0\leq l\leq \log (\frac{1}{\sqrt{ A}})}\|[a,\,S_{l-j}U_j^lS_{l-j}]^kf\|_{L^2(\mathbb{R}^d)}\\
&&+\sum_{l:\,l<0}\|[a,\,S_{l-j}U_j^lS_{l-j}]^kf\|_{L^2(\mathbb{R}^d)}\\
&\lesssim&\Big(\sum_{l:\,l>\log (\frac{1}{\sqrt{ A}})}l^{-\varepsilon\beta}+A^{\sigma}\sum_{l:\,l<0}2^{\sigma l}\Big)\|f\|_{L^2(\mathbb{R}^d)}\\
&&+A^{(k+1)\varepsilon}\sum_{l:\,0\leq l\leq \log(\frac{1}{\sqrt{ A}})}2^{(k+1)l\varepsilon}\|f\|_{L^2(\mathbb{R}^d)}\\
&\lesssim&\log^{-\varepsilon\beta+1}(\frac{1}{A})\|f\|^2_{L^2(\mathbb{R}^d)}.
\end{eqnarray*}
This, via (\ref{equation0equiv}),  leads to our desired conclusion.\end{proof}

{\it Proof of Theorem \ref{thm1.1}: $L^p(\mathbb{R}^d)$ boundedness}. By duality, it suffices to prove that $T_{\Omega,a;\,k}$ is bounded on $L^p(\mathbb{R}^d)$ for $2<p<2\beta$.

For $j\in\mathbb{Z}$, let $K_j(x)=\frac{\Omega(x)}{|x|^{d+k}}\chi_{\{2^{j-1}\leq |x|<2^j\}}(x)$.
Let $\omega\in C^{\infty}_0(\mathbb{R}^d)$ be a nonnegative radial function such that
$${\rm supp}\, \omega\subset \{x:\,|x|\leq 1/4\},\,\,\,\int_{\mathbb{R}^d}\omega(x)dx=1,$$
and
$$\int_{\mathbb{R}^d}x^{\gamma}\omega(x)dx=0,\,\,\,1\leq |\gamma|\leq k.$$
For $j\in \mathbb{Z}$, set $\omega_j(x) = 2^{-dj}\omega(2^{-j}x)$. For a positive integer $l$, define
$$H_l(x)=\sum_{j\in\mathbb{Z}}K_j*\omega_{j-l}(x).$$
Let $R_l$ be the convolution operator with kernel $H_l$. For a function $a$ on $\mathbb{R}^d$ such that $\nabla a\in L^{\infty}(\mathbb{R}^d)$, recall that $[a,\,R_l]^k$ denotes the $k$-th commutator of $R_l$ with symbol $a$.

We claim that for each fixed $\varepsilon\in (0,\,1)$, $l\in\mathbb{N}$,
\begin{eqnarray}\label{equation0.12}
\|T_{\Omega, a;\,k}f-[a,\,R_{l}]^kf\|_{L^2(\mathbb{R}^d)}\lesssim l^{-\varepsilon\beta+1}\|f\|_{L^2(\mathbb{R}^d)}.
\end{eqnarray}
To prove this, write
$$H_l(x)-\sum_{j\in\mathbb{Z}}K_j(x)=\sum_{j\in\mathbb{Z}}\big(K_j(x)-K_j*\omega_{j-l}(x)\big)=:\sum_{j\in\mathbb{Z}}\mu_{j,l}(x).$$
By the vanishing moment of $\omega$, we know that for all multi-indices $\gamma\in\mathbb{Z}_+^d$ with  $1\leq |\gamma|\leq k$, $\partial^{\gamma}\widehat{\omega}(0)=0.$
By Taylor series expansion and the fact that $\widehat{\omega}(0)=1$, we deduce that
$$|\widehat{\omega}(2^{j-l}\xi)-1|\lesssim \min\{1,\,|2^{j-l}\xi|^{k+1}\}.
$$When $\Omega\in GS_{\beta}(S^{d-1})$ for some $\beta\in (1,\,\infty)$, it was proved in \cite[p. 458]{gras} that
$$
|\widehat{K_j}(\xi)|\lesssim 2^{-jk}\min\{1,\,\log ^{-\beta}(2+|2^j\xi|)\}.
$$
Thus, the Fourier transform estimate
\begin{eqnarray}\label{eq0.fourier}
&&|\widehat{\mu_{j,l}}(\xi)|=|\widehat{K_j}(\xi)||\widehat{\omega}(2^{j-l}\xi)-1|\lesssim 2^{-jk}\min\{\log^{-\beta}(2+|2^j\xi|),\,|2^{j-l}\xi|^{k+1}\}
\end{eqnarray}
holds true. On the other hand, a trivial computation shows that for all multi-indices $\gamma\in\mathbb{Z}_+^d$,
$$\|\partial^{\gamma}\widehat{K_j}\|_{L^{\infty}(\mathbb{R}^{d})}\lesssim \|\Omega\|_{L^1(S^{d-1})}2^{(|\gamma|-k)j},$$ and so for all $\xi\in\mathbb{R}^d$,
\begin{eqnarray}\label{equation0.derivative}
&&|\partial^{\gamma}\widehat{\mu_{j,l}}(\xi)|\lesssim \sum_{\gamma_1+\gamma_2=\gamma}|\partial^{\gamma_1}\widehat{K_j}(\xi)||\partial^{\gamma_2}\widehat{\omega}(2^{j-l}\xi)|\lesssim \|\Omega\|_{L^1(S^{d-1})}2^{j(|\gamma|-k)}.
\end{eqnarray}
The Fourier transforms (\ref{eq0.fourier})  and (\ref{equation0.derivative}), via Theorem \ref{thm0.21} with $A=2^{-l}$, lead to (\ref{equation0.12}) immediately.

Let $\varepsilon\in (0,\,1)$ be a constant which will be chosen later. An application of (\ref{equation0.12}) gives us that
\begin{eqnarray}\label{equation0.17}
\big\|[a,\,R_{2^l}]^kf-[a,\, R_{2^{l+1}}]^kf\big\|_{L^2(\mathbb{R}^d)}\lesssim 2^{(-\varepsilon\beta+1)l}\|f\|_{L^2(\mathbb{R}^d)}.
\end{eqnarray}
Therefore, the series
\begin{eqnarray}\label{equation0.15}
T_{\Omega,a;\,k}=[a,\,R_2]^k+\sum_{l=1}^{\infty}([a,\,R_{2^{l+1}}]^k-[a,\,R_{2^{l}}]^k)
\end{eqnarray}
converges in $L^2(\mathbb{R}^d)$ operator norm.

For $l\in\mathbb{N}$, let $L_l(x,\,y)=H_l(x-y)(a(x)-a(y))^k$. We claim that for any $y,\,y'\in\mathbb{R}^d$,
\begin{eqnarray}\label{equation0.hormander}&&
\int_{|x-y|\geq  2|y-y'|}|L_l(x,y)-L_l(x,y')|dx\\
&&\quad+\int_{|x-y|\geq  2|y-y'|}|L_l(y,x)-L_l(y',x)|dx\lesssim l.\nonumber\end{eqnarray}
To prove this, let $|y-y'|=r$. A trivial computation yields
\begin{eqnarray*}
\int_{|x-y|\geq 2r}\big|H_l(x-y)(a(y)-a(y'))^k\big|dx&\lesssim& r\sum_{j}\int_{|x|\geq 2r}|K_j*\omega_{j-l}(x)|dx\\
&\lesssim&r^k\sum_{j:\,2^{j-2}\geq r}\|K_j\|_{L^1(\mathbb{R}^d)}\|\omega_{j-l}\|_{L^1(\mathbb{R}^d)}\lesssim1,
\end{eqnarray*}
since $\|K_j\|_{L^1(\mathbb{R}^d)}\lesssim 2^{-j}$. For each fixed $j\in\mathbb{Z}$, observe  that
$$\|\omega_{j-l}(\cdot-y)-\omega_{j-l}(\cdot-y')\|_{L^1(\mathbb{R}^d)}\lesssim \min\{1,\,2^{l-j}|y-y'|\}.
$$
It then follows from  Young's inequality that
\begin{eqnarray*}
&&\int_{|x-y\geq 2r}|H_l(x-y)-H_l(x-y')||a(x)-a(y)|^kdx\\
&&\quad=\sum_{n=1}^{\infty}\int_{2^nr\leq |x-y\leq 2^{n+1}r}|H_l(x-y)-H_l(x-y')||a(x)-a(y)|^kdx\\
&&\quad\lesssim \sum_{n=1}^{\infty}(2^nr)^k\sum_{j:\,2^j\approx 2^nr}\|K_j\|_{L^1(\mathbb{R}^d)}
\|\omega_{j-l}(\cdot-y)-\omega_{j-l}(\cdot-y')\|_{L^1(\mathbb{R}^d)}\\
&&\quad\lesssim \sum_{k=1}^{\infty}\min\{1,\,2^{-k}2^{l}\}\lesssim l.
\end{eqnarray*}
Combining the estimates above gives us that
\begin{eqnarray*}
&&\int_{|x-y|\geq  2|y-y'|}|L_l(x,y)-L_l(x,y')|dx\\
&&\quad\leq \int_{|x-y|\geq 2r}\big|H_l(x-y)(a(y)-a(y'))^k\big|dx\\
&&\qquad+\int_{|x-y\geq 2r}|H_l(x-y)-H_l(x-y')||a(x)-a(y)|^kdx\lesssim l.
\end{eqnarray*}
Similarly, we can verify that
$$\int_{|x-y|\geq  2|y-y'|}|L_l(y,x)-L_l(y',x)|dx\lesssim l.$$
This establishes (\ref{equation0.hormander}).

Recall that  $T_{\Omega,\,a;k}$ is bounded on $L^2(\mathbb{R}^d)$. It follows from (\ref{equation0.12}) that $[a,\,R_l]^k$ is also bounded on $L^2(\mathbb{R}^d)$ with bound independent of $l$. This, along with (\ref{equation0.hormander}) and  Calder\'on-Zygmud theory, tells us that
$$
\big\|[a,\,R_l]^kf-[a,\, R_{l+1}]^kf\big\|_{L^p(\mathbb{R}^d)}\lesssim l\|f\|_{L^p(\mathbb{R}^d)},\,\,p\in (1,\,\infty),
$$ and so
\begin{eqnarray}\label{equation0.16}
\big\|[a,\,R_{2^l}]^kf-[a,\, R_{2^{l+1}}]^kf\big\|_{L^p(\mathbb{R}^d)}\lesssim 2^l\|f\|_{L^p(\mathbb{R}^d)},\,\,p\in (1,\,\infty).
\end{eqnarray}
Interpolating inequalities (\ref{equation0.17}) and (\ref{equation0.16}) shows that for any $\varrho\in (0,\,1)$ and $p\in (2,\,\infty)$,
\begin{eqnarray*}
\big\|[a,\,R_{2^l}]^kf-[a,\, R_{2^{l+1}}]^kf\big\|_{L^p(\mathbb{R}^d)}\lesssim 2^{(-2\varepsilon\beta/p+1+\varrho)l}\|f\|_{L^p(\mathbb{R}^d)}.
\end{eqnarray*}
For each $p$ with $2<p<2\beta$, we can choose $\varepsilon>0$ close to $1$ sufficiently, and $\varrho>0$ close to $0$ sufficiently, such that $2\varepsilon\beta/p-1-\varrho>0$. This, in turn, shows that
$$\sum_{l=1}^{\infty}\big\|[a,\,R_{2^l}]^kf-[a,\, R_{2^{l+1}}]^kf\big\|_{L^p(\mathbb{R}^d)}\lesssim \|f\|_{L^p(\mathbb{R}^d)},
$$
and the series (\ref{equation0.15}) converges in the
$L^p(\mathbb{R}^d)$ operator norm. Therefore, $T_{\Omega,\,a;k}$ is bounded on $L^p(\mathbb{R}^d)$ for $2<p<2\beta$. This finishes the proof of Theorem \ref{thm1.1}.\qed

\begin{remark}Let $\Omega$ be
homogeneous of degree zero, integrable and have mean value zero on
${S}^{d-1}$, $T_{\Omega}$ be the homogeneous singular integral operator defined by (\ref{equation1.4}). For $b\in{\rm BMO}(\mathbb{R}^d)$, define the commutator of $T_{\Omega}$ and $b$ by
$$[b,\,T_{\Omega}]f(x)=b(x)T_{\Omega}f(x)-T_{\Omega}(bf)(x).
$$
When $\Omega\in {\rm Lip}_{\alpha}(S^{d-1})$ with $\alpha\in (0,\,1]$, Uchiyama \cite{uch} proved that $[b,\,T_{\Omega}]$ is a compact operator on $L^p(\mathbb{R}^d)$ ($p\in (1,\,\infty)$) if and only if $b\in {\rm CMO}(\mathbb{R}^d)$, where ${\rm CMO}(\mathbb{R}^d)$ is the closure of $C^{\infty}_0(\mathbb{R}^d)$ in the ${\rm BMO}(\mathbb{R}^d)$ topology,
which coincide with the space of functions of vanishing mean oscillation. When $\Omega\in GS_{\beta}(S^{d-1})$ for $\beta\in (2,\,\infty)$,  Chen and Hu \cite{chenhu} considered the compactness of $[b,\,T_{\Omega}]$ on $L^p(\mathbb{R}^d)$ with $\beta/(\beta-1)<p<\beta$. For other work about the compactness of $[b,\,T_{\Omega}]$, see \cite{taoyang} and the references therein.  It is of interest to characterize the compactness of Calder\'on commutator $T_{\Omega,\,a;\,k}$ on $L^p(\mathbb{R}^d)$ ($p\in (1,\,\infty)$). We will consider this in a forthcoming paper.
\end{remark}

\medskip

{\bf Acknowledgement}. The authors would like to express their sincerely thanks to
the referee for his/her valuable remarks and suggestions, which made this paper
more readable. Also, the authors would like to thank professor Dashan Fan for helpful suggestions and comments.

\end{document}